\newtheorem{theorem}{Theorem}[section]
\newtheorem{thm}[theorem]{Theorem}
\newtheorem{lem}[theorem]{Lemma}
\newtheorem{proposition}[theorem]{Proposition}
\newtheorem{corollary}[theorem]{Corollary}
\newtheorem{assumption}[theorem]{Assumption}
\theoremstyle{definition}
\newtheorem{defn}[theorem]{Definition}
\theoremstyle{remark}
\newtheorem{remark}[theorem]{Remark}
\newtheorem{rem}[theorem]{Remark}
\numberwithin{equation}{section}
 \DeclareMathAlphabet{\mathpzc}{OT1}{pzc}{m}{it}
  \newcommand{\dif}{\mathrm{d}}
 \newcommand{\E}{\mathbb{E}}
 \newcommand{\e}{\varepsilon}
 \newcommand{\p}{\partial}
 \newcommand{\Ll}{\langle}
 \newcommand{\Rr}{\rangle}
 \newcommand{\N}{\mathbb{N}}
 \newcommand{\R}{\mathbb{R}}
 \newcommand{\PP}{\mathbb{P}}
 \newcommand{\mcl}{\mathcal}
 \newcommand{\Be}{\begin{equation}}
 \newcommand{\Ee}{\end{equation}}
 \newcommand{\Bs}{\begin{split}}
 \newcommand{\Es}{\end{split}}
  \newcommand{\Bes}{\begin{equation*}}
 \newcommand{\Ees}{\end{equation*}}
 \newcommand{\BT}{\begin{thm}}
 \newcommand{\ET}{\end{thm}}
 \newcommand{\Bp}{\begin{proof}}
 \newcommand{\Ep}{\end{proof}}
 \newcommand{\BL}{\begin{lem}}
 \newcommand{\EL}{\end{lem}}
 \newcommand{\BP}{\begin{proposition}}
 \newcommand{\EP}{\end{proposition}}
 \newcommand{\BC}{\begin{corollary}}
 \newcommand{\EC}{\end{corollary}}
 \newcommand{\BR}{\begin{rem}}
 \newcommand{\ER}{\end{rem}}
 \newcommand{\BD}{\begin{defn}}
 \newcommand{\ED}{\end{defn}}
 \newcommand{\BI}{\begin{itemize}}
 \newcommand{\EI}{\end{itemize}}
 \newcommand{\tl}{\tilde}
\def\be#1{\begin{equation*}#1\end{equation*}}
\def\ben#1{\begin{equation}#1\end{equation}}
\def\bes#1{\begin{equation*}\begin{split}#1\end{split}\end{equation*}}
\def\besn#1{\begin{equation}\begin{split}#1\end{split}\end{equation}}
\newcommand{\eq}{\eqref}
\begin{document}
\title[Multivariate approximations in Wasserstein distance]
{Multivariate approximations in Wasserstein distance by Stein's method and Bismut's formula}

\author[X. Fang]{Xiao Fang}
\address{Department of Statistics,
The Chinese University of Hong Kong,
Shatin, N.T.,
Hong Kong}
\email{xfang@sta.cuhk.edu.hk }
\author[Q.M. Shao]{Qi-Man Shao}
\address{Department of Statistics,
The Chinese University of Hong Kong,
Shatin, N.T.,
Hong Kong}
\email{qmshao@sta.cuhk.edu.hk }
\author[L. Xu]{Lihu Xu}
\address{1. Department of Mathematics,
Faculty of Science and Technology,
University of Macau,
Av. Padre Tom\'{a}s Pereira, Taipa,
Macau, China. 
2. Zhuhai UM Science \& Technology Research Institute, Zhuhai, China}
\email{lihuxu@umac.mo}


\begin{abstract} \label{abstract}
Stein's method has been widely used for probability approximations.
However, in the multi-dimensional setting, most of the results are for multivariate normal approximation or for test functions with bounded second- or higher-order derivatives.
For a class of multivariate limiting distributions, we use Bismut's formula in Malliavin calculus to control the derivatives of the Stein equation solutions by the first derivative of the test function.
Combined with Stein's exchangeable pair approach, we obtain a general theorem for multivariate approximations with near optimal error bounds on the Wasserstein distance.
We apply the theorem to the unadjusted Langevin algorithm.

\medskip

\noindent{\bf AMS 2010 subject classification:} 60F05, 60H07.

\noindent{\bf Keywords and phrases:} Bismut's formula, Langevin algorithm, Malliavin calculus, multivariate approximation, rate of convergence, Stein's method, Wasserstein distance.

\end{abstract}

\maketitle

\section{Introduction}

Let $W$ and $Z$ be $d$-dimensional random vectors, $d\geq 1$,
where $Z$ has the density
\be{
Ke^{-U(x)}, \quad  x\in \mathbb{R}^d
}
for a given function $U: \mathbb{R}^d\to \mathbb{R}$ and a possibly unknown normalizing constant $K$.
We are concerned with bounding their Wasserstein distance, defined as follows:
\be{
d_{\mcl W}(\mathcal{L}(W), \mathcal{L}(Z)):\ = \ \sup_{h\in \text{Lip}(\mathbb{R}^d,1)} |\E[h(W)]-\E[h(Z)]|,
}
where $\text{Lip}(\mathbb{R}^d, 1)$ denotes the set of Lipschitz functions $h: \mathbb{R}^d\to \mathbb{R}$ with Lipschitz constant 1,
that is, $|h(x)-h(y)|\leq |x-y|$ for any $x, y\in \mathbb{R}^d$, and $|\cdot|$ denotes the Euclidean metric.

Our main tool is Stein's method for probability approximations \cite{St72}. Since it was first introduced,
there have been many developments in multivariate probability approximations.
However, most of the results are for multivariate normal approximation or for test functions $h$ with bounded second- or higher-order derivatives. See, for example, \cite{GoRi96, ChMe08, ReRo09, MaGo16}.
Although these results may be used to deduce error bounds for the Wasserstein distance,
such error bounds are far from optimal.
The literature on (near) optimal error bounds for the Wasserstein distance are limited to a few special cases, including
multivariate normal approximation for sums of independent random vectors \cite{VaVa10} and
multivariate approximation for the stationary distribution of certain Markov chains with bounded jump sizes \cite{Gu14, BrDa17}.

The main difficulty in obtaining an optimal error bound for the Wasserstein distance is controlling the derivatives of the Stein equation solutions using the first derivative of the test function $h$.
For multivariate non-normal approximations, the Stein equation solution is typically expressed in terms of a stochastic process (cf. \eq{e:SolRep}). This unexplicity means that
we cannot use the usual integration by parts formula when studying its derivatives.
This is in contrast to multivariate normal approximation, where we have an explicit expression of the Stein equation solution (cf. \eq{2}).
We use Bismut's formula (cf. \eq{de:IBP}) in Malliavin calculus to overcome this difficulty and obtain estimates for the derivatives of the Stein equation solutions for a large class of limiting distributions.
We note that Nourdin and Peccatti \cite{NoPe09, NoPe12} first combined Malliavin calculus and Stein's method to study normal approximation in a fixed Wiener chaos of a general Gaussian process. See \cite{NoPeRe10} and \cite{KuTu18} for generalizations to multivariate normal approximation and one-dimensional diffusion approximations, respectively.

The exchangeable pair is a powerful tool in Stein's method to exploit the dependence structure within the random vector $W$. It was elaborated in \cite{St86} and works for both independent and many dependent random vectors.
In particular, we use a generalized version in \cite{Ro08} and assume that we can construct a suitable random vector $W'$ on the same probability space and with the same distribution as $W$.
We then follow the idea of \cite{ChSh11} and \cite{ShZh16},
by studying the conditional expectations $\E[W'-W|W]$ and $\E[(W'-W)(W'-W)^{\rm T}|W]$ where ${\rm T}$ is the transpose operator, to identify the limiting distribution of $W$ and obtain an error bound for the Wasserstein distance in the approximation.
Our main result can be regarded as an extension of the result in \cite{ChSh11} to the multi-dimensional setting.
An additional logarithmic factor appears in our error bound due to the multi-dimensionality. We illustrate some of the techniques for removing it in the special case of multivariate normal approximation for standardized sums of independent and bounded random vectors.

Our main theorem can be used in justifying the so-called unadjusted Langevin algorithm \cite{RoTw96},
which is widely used in Bayesian inference and statistical physics to sample from a distribution that is known up to a normalizing constant.
In particular, we provide an error bound for the Wasserstein distance between the sampling distribution and the target distribution in terms of the step size in the algorithm.
Our result complements those in the literature by
relaxing the conditions on the increment distribution.

%


We would like to mention two other distances between distributions that have also been widely studied.
One is for comparing probabilities on convex sets in $\mathbb{R}^d$. See, for example, \cite{Go91, Be05} for multivariate normal approximation for sums of independent random vectors, and \cite{RiRo96} for sums of bounded random vectors with a certain dependency structure.
Proving optimal error bounds in this case requires special techniques involving smoothing of the test functions and induction or recursion.
The other distance is the so-called Wasserstein-2 distance which is stronger than the Wasserstein distance considered in this paper.
See, for example, \cite{LNP15, Bo18} and the references therein for related results.
The techniques used therein, which involves transportation inequalities and/or Stein kernels, are very different from ours.

The paper is organized as follows. In Section 2, we present our main result. In Section 3, we state the new properties of the Stein equation solutions and use them to prove our main result. The application to the unadjusted Langevin algorithm is discussed in Section 4. We develop Bismut's approach to Malliavin calculus to prove the properties of the Stein equation solutions in Sections 5--7. Some of the details are deferred to Section 8 and Appendix A. In Appendix B, we illustrate some of the techniques for removing the logarithmic term in our main result for the special case of multivariate normal approximation for sums of independent and bounded random vectors.

\section{Notation, assumptions and the main result}

\subsection{Notation}
The inner product of $x, y\in \R^d$ is denoted by $\Ll x,y\Rr$.
The Euclidean metric is denoted by $|x|$.
Each time we speak about Lipschitz functions on $\R^d$, we use the Euclidean norm.
$\mcl C(\R^d,\R)$ denotes the collection of all continuous functions $f: \R^d \rightarrow \R$ and $\mcl C^k(\R^d,\R)$ with $k \ge 1$ denotes the collection of all $k$-th order continuously differentiable functions.
$\mcl C^\infty_0(\R^d,\R)$ denotes the set of smooth functions whose every order derivative decays to zero at infinity.  For $f \in \mcl C^2(\R^d,\R)$ and $u, u_1, u_2, x \in \R^d$, the directional derivative $\nabla_u f(x)$ and  $\nabla_{u_2} \nabla_{u_1} f(x)$ are  defined by
\ \ \
\Bes
\begin{split}
& \nabla_u f(x)\ = \ \lim_{\e \rightarrow 0} \frac{f(x+\e u)-f(x)}{\e}, \\
\end{split}
\Ees
\Bes
\begin{split}
&\nabla_{u_2} \nabla_{u_1} f(x)\ = \ \lim_{\e \rightarrow 0} \frac{\nabla_{u_1}f(x+\e u_2)-\nabla_{u_1}f(x)}{\e},
\end{split}
\Ees
respectively.
Let $\nabla f(x)\in \R^d$, $\nabla^2 f(x)\in \R^{d \times d}$ and $\Delta f(x)\in \R$ denote the gradient, the Hessian and the Laplacian of $f$, respectively.
It is known that
$\nabla_u f(x) = \Ll \nabla f(x), u\Rr$ and
$\nabla_{u_2} \nabla_{u_1} f(x)=\Ll \nabla^2 f(x), u_1 u^{\rm T}_2\Rr_{{\rm HS}}$, where ${\rm T}$ is the transpose operator and $\Ll A, B\Rr_{{\rm HS}}:=\sum_{i,j=1}^d A_{ij} B_{ij}$ for $A, B \in \R^{d \times d}$. Given a matrix $A \in \R^{d \times d}$, its Hilbert-Schmidt norm is
$||A||_{{\rm HS}} \ = \ \sqrt{\sum_{i,j=1}^{d} A^{2}_{ij}} \ = \ \sqrt{{\rm Tr} (A^{\rm T} A)}$ and its operator norm is
$||A||_{op} \ = \ \sup_{|u|=1} |A u|$. We have the following relations:
\Be  \label{e:RelEigHS}
||A||_{op} \ = \ \sup_{|u_{1}|, |u_{2}|=1} |\Ll A, u_{1} u^{\rm T}_{2}\Rr_{\text{HS}}|, \ \ \ \ \ \ \ \ \ ||A||_{op} \ \le \ ||A||_{{\rm HS}} \ \le \ \sqrt d ||A||_{op} .
\Ee


We can also define $\nabla_u f(x)$ and $\nabla_{u_1} \nabla_{u_2} f(x)$ for a second-order differentiable function $f=(f_1,\dots, f_d)^{\rm T}: \R^d \rightarrow \R^d$ in the same way as above. Define $\nabla f(x)=(\nabla f_1(x), \dots, \nabla f_d(x)) \in \R^{d \times d}$ and $\nabla^2 f(x)=\{\nabla^2 f_i(x)\}_{i=1}^d \in \R^{d \times d \times d}$.
In this case, we have $\nabla_u f(x)=[\nabla f(x)]^{\rm T} u$ and $\nabla_{u_2} \nabla_{u_1} f(x)=\{ \Ll \nabla^2 f_1(x), u_1 u^{\rm T}_2\Rr_{{\rm HS}},\dots, \Ll \nabla^2 f_d(x), u_1 u^{\rm T}_2\Rr_{{\rm HS}}\}^{\rm T}$.

Moreover, $\mcl C_b(\R^{d_1}, \R^{d_2})$ denotes the set of all bounded measurable functions from $\R^{d_1}$ to $\R^{d_2}$ with the supremum norm defined by
$$\|f\|\ = \ \sup_{x \in \R^{d_1}} |f(x)|.$$
Denote by $C_{p_1,...p_k}$ some positive number depending on $k$ parameters, $p_1,...,p_k$, whose exact values can vary from line to line.

\subsection{Assumptions}
We aim to approximate $W$, a $d$-dimensional random vector of interest, by a non-degenerate probability measure $\mu$ on $\R^d$, which is the 
ergodic measure (cf. Remark \ref{r1}) of the following stochastic differential equation (SDE):
\
\Be  \label{de:OU}
\dif X_t\ = \ g(X_t) \dif t+\sqrt 2 \dif B_t,   \ \ \ \ \ \ \ X_0\ = \ x,
\Ee
where $B_t$ is a standard $d$-dimensional Brownian motion and
 \begin{assumption}  \label{a:A}
 $g\in \mcl C^2(\R^d,\R^d)$ and
there exist $\theta_0 > 0$ and $\theta_1, \theta_2, \theta_3\ge 0$ such that 
\Be \label{a2}
\Ll u, \nabla_u g(x)\Rr\ \le \ -\theta_0 \left(1+\theta_1 |x|^{\theta_2}\right)|u|^2, \ \ \ \ \ \ \forall \ u, x \in \R^d;
\Ee
\Be \label{a3}
|\nabla_{u_1} \nabla_{u_2} g(x)| \ \le \  \theta_3  (1+ \theta_1 |x|)^{\theta_2-1} |u_1| |u_2|, \ \ \ \ \ \forall \ u_1, u_2, x \in \R^d.
\Ee
\end{assumption}
\
\begin{remark}
By integration,
\eq{a2} implies
\Be \label{a1a}
\langle x, g(x)-g(0)\rangle \ \le \  -\theta_0 (|x|^2+ \frac{\theta_1 |x|^{2+\theta_2}}{1+\theta_2}), \ \ \ \ \ \forall \  x \in \R^d,
\Ee
and \eq{a3} implies
\Be \label{a1b}
|g(x)| \ \le \ \theta_4(1+|x|^{1+\theta_2}), \ \ \ \ \forall \ x \in \R^d,
\Ee
where $\theta_4>0$ is a constant depending on $\theta_1, \theta_2, \theta_3$, $g(0)$ and $\nabla g(0)$.
\end{remark}




\begin{remark}\label{r1}
The SDE \eq{de:OU} is known as the Langevin SDE.
The conditions $g\in \mcl C^1(\R^d,\R^d)$ and \eqref{a2} ensure that SDE \eqref{de:OU} has a unique strong solution, 
which hereafter is denoted by $X^x_t$.
This follows by Theorem 2.2 in \cite{DaGo01}.
In fact, (i) and (ii) of Hypothesis 2.1 in \cite{DaGo01} automatically hold, and (iii) is verified by
$$
\Ll g(x)-g(y), x-y\Rr \ = \ \int_0^1 \Ll \nabla_{x-y} g(\theta x+(1-\theta)y), x-y\Rr \dif \theta \ \le \ 0.
$$
Moreover, these two conditions also imply that the SDE admits a unique ergodic measure $\mu$ such that
\be{
\lim_{t \rightarrow \infty} \E f(X_t^x) \ = \  \mu(f)  
}
for all bounded continuous functions $f: \R^d \to \R$.
We leave the details to Appendix A.
\end{remark}

\begin{remark}
In the literature on multivariate probability approximations by $\mu$, it is often assumed that $\mu$ is strongly log-concave. See, for example, Eq. (1) in \cite{Da17}, Assumption H2 in \cite{DuMo18} and Theorem 2.1 in \cite{MaGo16}.
This corresponds to condition \eq{a2} with $\theta_0>0$ and $\theta_1=0$.
We use this condition to ensure the existence of the Stein equation solution (cf. \eq{e:SolRep}).
Moreover, because we will use integration by parts (under Malliavin calculus), we need to impose conditions on the higher-order derivatives of $g$, such as \eq{a3}.
\end{remark}

Below, we give two examples of $\mu$ that satisfy Assumption \ref{a:A}
and one counterexample.

\noindent {\bf Example 1}: $g(x)=-Ax$ and $A \in \R^{d \times d}$ is a symmetric positive definite matrix with eigenvalues $\lambda_1\geq \dots \geq \lambda_d>0$. The corresponding measure $\mu$ is a Gaussian measure with the density $$\varphi(x)=\left(\frac{\det(A)}{2 \pi}\right)^{d/2} \exp\left(-\frac{\Ll x, A x\Rr}{2}\right).$$
It can be verified that Assumption \ref{a:A} is satisfied with $\theta_0=\lambda_d, \theta_1=0, \theta_2=1, \theta_3=1$, and \eq{a1b} holds with $\theta_4=\lambda_1.$ \\


\noindent {\bf Example 2}:
$g(x)=-c(1+|x|^2)^{p/2} x$ with $p \ge 0$ and $c>0$. The corresponding measure $\mu$ has the density function
$$K e^{-\frac{c}{p+2} (1+|x|^2)^{p/2+1}}  \ \ \ {\rm with} \ \ \  K=\left(\int_{\R^d} e^{-\frac{c}{p+2} (1+|x|^2)^{p/2+1}} \dif x\right)^{-1}.$$
In this case, we have
$$\Ll u, \nabla_{u} g(x)\Rr\ = \ -c(1+|x|^2)^{\frac p2} |u|^2-c p(1+|x|^2)^{\frac{p-2}2} |\Ll x,u\Rr|^2 ,$$
\Bes
\begin{split}
\nabla_{u_2} \nabla_{u_1} g(x) \ = & \ -cp(1+|x|^2)^{\frac p2-1} \left(\Ll x,u_1\Rr u_2+\Ll x,u_2\Rr u_1+\Ll u_1,u_2\Rr x\right) \\
& \ -cp(p-2)(1+|x|^2)^{\frac p2-2} \Ll x,u_1\Rr\Ll x,u_2\Rr x.
\end{split}
\Ees
It is then straightforward to determine $\theta_0,\dots, \theta_4$ which depend on $c$ and $p$.
We omit the details.

\medskip

\noindent{\bf Counterexample}: $g(x)=-c|x|^p x$ with $p\ge 1$ and $c>0$. We have
$$\Ll u, \nabla_{u} g(x)\Rr\ = \ -c\left(|x|^{p} |u|^{2}+p|x|^{p-2} |\Ll x, u\Rr|^{2}\right),$$
which does not satisfy \eq{a2} with any positive $\theta_0$.

\subsection{Main result}
We use a version of Stein's exchangeable pair approach by R\"ollin \cite{Ro08} to exploit the dependence structure of the random vector $W$ of interest with $\E |W|<\infty$.
Suppose we can construct a suitable random vector $W'$ on the same probability space and with the same distribution as $W$.
Denote $\delta=W'-W$ and assume that
\
\Be  \label{e:A1}
\E\left[\delta|W\right]\ = \ \lambda (g(W)+R_1),
\Ee
where $R_1$ is an $\R^d$-valued random vector. Further assume that
\
\Be  \label{e:A2}
\E \left[\delta \delta^{\rm T}|W\right]\ = \ 2\lambda (I_d+R_2)
\Ee
where $I_d$ denotes the $d\times d$ identity matrix. In some applications, $R_{2}$ has the form
\Be \label{e:SpeR2}
R_{2} \ = \ r_{1} r^{\rm T}_{2}+...+r_{2p-1} r^{\rm T}_{2p},
\Ee
where $p \in \N$ and $r_{1},...,r_{2p}\in \R^d$. In the application to the unadjusted Langevin algorithm in Section 4, $p=1$.

Our main result is the following theorem on multi-dimensional non-normal (including normal) approximations.
\ \ \ \
\begin{thm}  \label{t:MThm}
Let Assumption \ref{a:A}, \eqref{e:A1} and \eqref{e:A2} hold. Then we have
\Be  \label{e:EstDel-0}
\begin{split}
d_{\mcl W}(\mathcal{L}(W), \mu)& \ \le \ C_\theta  \bigg\{ \frac{1}{\lambda}\E\left[|\delta|^3 \left(|\log|\delta|| \vee 1\right)\right]+\E |R_1|+\sqrt d \E\left[||R_{2}||_{\rm HS}\right]\bigg\},
\end{split}
\Ee
where $\mu$ is the ergodic measure of SDE \eqref{de:OU} and hereafter $C_\theta$ is short hand for $C_{\theta_0,...,\theta_4}$. If $R_{2}$ has the form \eqref{e:SpeR2}, then
\Be  \label{e:EstDel}
\begin{split}
d_{\mcl W}(\mathcal{L}(W), \mu)& \ \le \ C_\theta  \bigg\{ \frac{1}{\lambda}\E\left[|\delta|^3 \left(|\log|\delta|| \vee 1\right)\right]+\E |R_1|+\sum_{i=1}^{p}\E\left[|r_{2i-1}||r_{2i}|\right]\bigg\}.
\end{split}
\Ee
\end{thm}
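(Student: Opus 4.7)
The plan is to implement Stein's method for the generator $L$ of the Langevin SDE \eqref{de:OU}, namely $Lf(x)=\langle g(x),\nabla f(x)\rangle+\Delta f(x)$. For $h\in\mathrm{Lip}(\R^d,1)$ the Stein equation $Lf_h=h-\mu(h)$ is solved by
\begin{equation*}
f_h(x) \ = \ -\int_0^\infty\big[P_t h(x)-\mu(h)\big]\,\dif t,\qquad P_t h(x):=\E h(X_t^x).
\end{equation*}
I would take as input the pointwise Bismut-type derivative bounds, to be established in Sections 5--7 by iterating \eqref{de:IBP}: for every $1$-Lipschitz $h$,
\begin{equation*}
|\nabla P_t h(x)|\le C_\theta e^{-c_\theta t},\qquad ||\nabla^2 P_t h(x)||_{{\rm op}}\le \frac{C_\theta\, e^{-c_\theta t}}{\sqrt{t\wedge 1}},\qquad ||\nabla^3 P_t h(x)||\le \frac{C_\theta\, e^{-c_\theta t}}{t\wedge 1}.
\end{equation*}
Integration in $t$ yields $\|\nabla f_h\|\vee\sup_x ||\nabla^2 f_h(x)||_{{\rm op}}\le C_\theta$ uniformly in $d$, whereas the third bound is \emph{not} integrable near $t=0$; that is precisely where the logarithmic factor will enter.

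Given these inputs, exchangeability of $(W,W')$ gives $\E[f_h(W')-f_h(W)]=0$, and a second-order Taylor expansion around $W$ rewrites this as
\begin{equation*}
0 \ = \ \E\langle\nabla f_h(W),\delta\rangle+\tfrac12\E\langle\nabla^2 f_h(W),\delta\delta^{\rm T}\rangle_{{\rm HS}}+\E[\mathrm{Rem}].
\end{equation*}
Conditioning on $W$ and inserting \eqref{e:A1} and \eqref{e:A2}, the first-order term becomes $\lambda\E\langle\nabla f_h(W),g(W)+R_1\rangle$ and the second-order term becomes $\lambda\E\Delta f_h(W)+\lambda\E\langle\nabla^2 f_h(W),R_2\rangle_{{\rm HS}}$. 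Recognising $\langle g,\nabla f_h\rangle+\Delta f_h=Lf_h=h-\mu(h)$ and rearranging yields the master identity
\begin{equation*}
\E[h(W)]-\mu(h) \ = \ -\E\langle\nabla f_h(W),R_1\rangle-\E\langle\nabla^2 f_h(W),R_2\rangle_{{\rm HS}}-\frac{1}{\lambda}\E[\mathrm{Rem}].
\end{equation*}

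The first term on the right is dominated by $\|\nabla f_h\|\,\E|R_1|\le C_\theta\E|R_1|$. For the second, Cauchy--Schwarz in the Hilbert--Schmidt inner product combined with $||A||_{{\rm HS}}\le\sqrt d\,||A||_{{\rm op}}$ produces the bound $C_\theta\sqrt d\,\E||R_2||_{{\rm HS}}$ that appears in \eqref{e:EstDel-0}; in the factored case \eqref{e:SpeR2} the identity $\langle\nabla^2 f_h(W),r_{2i-1}r_{2i}^{\rm T}\rangle_{{\rm HS}}=r_{2i}^{\rm T}\nabla^2 f_h(W)r_{2i-1}$ instead gives the direct operator-norm bound $||\nabla^2 f_h||_{{\rm op}}|r_{2i-1}||r_{2i}|$, which sums to $C_\theta\sum_i\E[|r_{2i-1}||r_{2i}|]$ and thus furnishes \eqref{e:EstDel}.

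The main obstacle is the remainder $\mathrm{Rem}$, because a naive third-order Taylor bound would invoke $||\nabla^3 f_h||$, which diverges. The trick is to pull the Taylor expansion inside the semigroup integral: writing
\begin{equation*}
\mathrm{Rem} \ = \ -\int_0^\infty\big[P_t h(W+\delta)-P_t h(W)-\langle\nabla P_t h(W),\delta\rangle-\tfrac12\langle\nabla^2 P_t h(W),\delta\delta^{\rm T}\rangle_{{\rm HS}}\big]\,\dif t,
\end{equation*}
the integrand at each $t$ admits two competing bounds: a second-order one of size $C|\delta|^2 e^{-c_\theta t}/\sqrt{t\wedge 1}$ and a third-order one of size $C|\delta|^3 e^{-c_\theta t}/(t\wedge 1)$. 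Splitting the $t$-integral at the natural scale $t_0=|\delta|^2\wedge 1$, the region $t\le t_0$ (using the second-order bound) contributes $C|\delta|^2\sqrt{t_0}\le C|\delta|^3$; the region $t_0<t\le 1$ (using the third-order bound) contributes $C|\delta|^3\log(1/t_0)\le C|\delta|^3(|\log|\delta||\vee 1)$; and the tail $t>1$ contributes $O(|\delta|^3)$ through exponential decay. Hence $\E|\mathrm{Rem}|\le C_\theta\E[|\delta|^3(|\log|\delta||\vee 1)]$, and taking the supremum over $h\in\mathrm{Lip}(\R^d,1)$ in the master identity completes both \eqref{e:EstDel-0} and \eqref{e:EstDel}.
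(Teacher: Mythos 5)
Your proof is correct in its overall structure and arrives at the right bound, but the treatment of the remainder takes a genuinely different route from the paper, and one of your claimed inputs is stronger than what the paper actually proves. The paper's strategy is to first establish, as a standalone regularity theorem (Theorem~\ref{t:D2FEst}), a logarithmic modulus of continuity for $\nabla^2 f$, namely $\sup_{x,|u|\le1}\big|\langle\nabla^2 f(x+\e u)-\nabla^2 f(x),u_1u_2^{\rm T}\rangle_{\rm HS}\big|\le C_\theta\|\nabla h\|\,|\e|(|\log|\e||\vee 1)|u_1||u_2|$. To prove that, they deliberately avoid third derivatives of $P_t h$: the proof uses the resolvent representation $f=\int_0^\infty e^{-t}P_t(f+\mu(h)-h)\,\dif t$ of Proposition~\ref{p:fRep}(2), splits the $t$-integral at $\e^2$, and in the region $t>\e^2$ uses the second-order Bismut identity \eqref{e:2ndDerEqn-3} (i.e.\ the object $\mathcal I^x_{u_2,u_1}(t)$) to convert the increment $\nabla^2 f(x+\e u)-\nabla^2 f(x)$ into a difference quotient times a $t^{-1}$-singular kernel. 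Your proposal instead takes as input a pointwise bound $\|\nabla^3 P_t h\|\le C_\theta e^{-c_\theta t}/(t\wedge 1)$, pushes the second-order Taylor expansion \emph{inside} the representation $f=-\int_0^\infty(P_t h-\mu(h))\,\dif t$, and balances second- and third-order remainder estimates by splitting the $t$-integral at $t_0=|\delta|^2$. This is legitimate and produces the same log factor for the same underlying reason (the $t^{-1}$ singularity), but note that Sections~5--7 do \emph{not} state a $\nabla^3 P_t h$ bound, so you would have to derive it. The good news is that it is derivable with exactly the same ingredients: by one more application of \eqref{de:IBP} on the representation \eqref{e:2ndDerEqn-1} one gets $\nabla_{u_3}\nabla_{u_2}\nabla_{u_1}P_t\phi=\E[\nabla_{u_1}\phi(X_t^x)\,\mathcal I^x_{u_3,u_2}(t)]+\E[\nabla_{u_1}\phi(X_t^x)\,\nabla_{u_3}\mathcal I^x_{u_2}(t)]$, whose moments are controlled by \eqref{de:The2Est>} and \eqref{de:DThe1Est>} and require only $g\in C^2$; and the exponential decay for $t\gg1$ follows from semigroup splitting $P_t=P_1\circ P_{t-1}$ together with the deterministic gradient decay \eqref{de:Jac1Est>}. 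So neither the $e^{-t}$ resolvent damping nor (crucially) $g\in C^3$ is needed for your route, and it is a valid, arguably more transparent, alternative to the paper's modulus-of-continuity lemma. One would still need the smoothing/limiting argument at the end to reduce from smooth test functions to general Lipschitz $h$, as in the paper's proof of Theorem~\ref{t:D2FEst}.
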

\ \ \

\begin{remark}
Note that when $\theta_0,\dots, \theta_3$ in Assumption \ref{a:A} and $\theta_4$ in \eq{a1b} are independent of the dimension $d$ as in the two examples above, the constant $C_\theta$ in \eq{e:EstDel-0} and \eq{e:EstDel} is dimension-free.
\end{remark}

\begin{remark}
Theorem \ref{t:MThm} can be regarded as an extension of \cite[Lecture 3, Theorem 1]{St86} and \cite[Theorem 1.1]{ChSh11} to the multi-dimensional and non-normal approximations,
with a minor cost of an additional logarithmic factor.
\end{remark}

\begin{remark}
If, instead of $\E \left[\delta \delta^{\rm T}|W\right]\approx 2\lambda I_d$ in \eq{e:A2}, we have $\E \left[\delta \delta^{\rm T}|W\right]\approx 2\lambda \Lambda$ for an invertible, positive definite matrix $\Lambda$, then we may approximate it by the ergodic measure of the following SDE with non-identity diffusion coefficient
\be{
\dif X_t\ = \ g(X_t) \dif t+\sqrt 2 \Lambda^{1/2} \dif B_t,   \ \ \ \ \ \ \ X_0\ = \ x.
}
Our approach is still applicable to this case, although details need to be work out with greater effort, especially if we
are interested in the dependence of the bound on $\Lambda$.
We may as well reduce the problem to the setting of Theorem \ref{t:MThm} by considering approximating $\Lambda^{-1/2} W$ by the ergodic measure of the SDE with identity diffusion coefficient and drift coefficient as
\be{
\widetilde{g}(\cdot)=\Lambda^{-1/2} g (\Lambda^{1/2}\cdot),
}
although then the problem becomes obtaining an explicit expression of $C_\theta$ in Theorem \ref{t:MThm} in terms of the parameters appearing in Assumption \ref{a:A}.
\end{remark}

\begin{remark}\label{r8}
In the case of multivariate normal approximation for sums of independent, bounded random variables $W=\frac{1}{\sqrt{n}}\sum_{i=1}^n X_i$ with $\E X_i=0, |X_i|\leq \beta$ and $\E W W^{\rm T}=I_d$, the bound in \eq{e:EstDel-0} reduces to, with details deferred to Appendix B,
\be{
 \frac{C d \beta}{\sqrt{n}}(1+\log n),
}
where $C$ is an absolute constant.
This is of the same order as in Theorem 2 of \cite{VaVa10}.
In this case, we may remove the additional logarithmic factor and obtain the error bound (see Appendix B)
\be{
\frac{Cd^2 \beta}{\sqrt{n}}.
}
The additional logarithmic term may also be removed for $W$ to be a sum of independent and unbounded random vectors and for $W$ to exhibit an exchangeable pair.
However, we would need certain moment assumptions and increase the dependence of the bound on the dimension.
Also in this case, under the additional assumption that $\E X_i X_i^{\rm T}=I_d$ for each $i=1,\dots, n$,
Bonis \cite{Bo18} obtained the optimal rate $O(\frac{\sqrt{d} \beta}{\sqrt{n}})$ in the stronger Wasserstein-2 distance, which seems better than the previous results of \cite{CoFaPa17} and \cite{Zh17}. Moreover, his general result \cite[Theorem 1]{Bo18} extended those in \cite{Sak85,Bob18}, and improved the multidimensional bound in \cite{Zh17} by removing some boundedness assumption and an additional $\log n$ factor therein. 
We do not know how to obtain their results using our approach.
\end{remark}


\section{Stein's method and the proof of Theorem \ref{t:MThm}} \label{s:Stein}
Let $g$ satisfy Assumption \ref{a:A}, and let $\mu$ be the ergodic measure of of SDE \eqref{de:OU}.
Then $\mu$ is invariant in the sense that
\ \
$$\int_{\R^d} \E f(X^x_t) \mu(\dif x)\ = \ \int_{\R^d} f(x) \mu(\dif x), \ \ \ \ \ \  f \in \mcl C^\infty_0(\R^d, \R).$$
It is well known that $\mu$ satisfies the following equation
\ \
\be{  
\int_{\R^d} \left[\Delta f(x)+\Ll g(x), \nabla f(x)\Rr\right] \mu(\dif x)\ = \ 0, \ \ \ \ \ \ f \in \mcl C^\infty_0(\R^d, \R).
}
See \cite[p. 326]{ABR99} for more details.

For a Lipschitz function $h:\R^d \rightarrow \R$, consider the Stein equation
\
\Be \label{de:StEq}
\Delta f+\Ll g(x), \nabla f(x)\Rr\ = \ h(x)-\mu(h),
\Ee
where $\mu(h):=\int_{\R^d} h(x) \mu(dx)$ which exists (cf. \eq{e:2ndMom}).
The solution $f:=f_h$ exists and we drop the subscript for ease of notation.
The following theorem on the regularity of $f$ is crucial for the proof of our main result.

\begin{theorem}  \label{t:D2FEst}
Let $h: \R^d \rightarrow \R$ be a Lipschitz function and let $\e \in \R$. For any $u_{1}, u_{2} \in \R^{d}$, we have
\ \ \
\Be \label{e:f'Est>}
\|\nabla f\| \ \le \  C_{\theta} \|\nabla h\|,
\Ee
\Be \label{e:D2FEst>}
\sup_{x \in \R^{d}}|\Ll \nabla^2 f(x), u_{1}u^{\rm T}_{2}\Rr_{{\rm HS}}| \ \le \ C_{\theta}  \|\nabla h\| |u_{1}||u_{2}|,
\Ee
\Be \label{e:D2FEst2>}
\sup_{x, u \in \R^{d}, |u| \le 1}|\Ll \nabla^2 f(x+\e u)-\nabla^2 f(x), u_{1}u^{\rm T}_{2}\Rr_{{\rm HS}}|  \ \le \  C_{\theta}  ||\nabla h|| |\e| \left(|\log |\e|| \vee 1\right) |u_{1}||u_{2}|.
\Ee
\end{theorem}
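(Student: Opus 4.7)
The plan is to exploit the semigroup representation of the solution,
\[
f(x) \;=\; -\int_0^\infty \bigl(P_t h(x)-\mu(h)\bigr)\,\dif t, \qquad P_t h(x) := \E h(X_t^x),
\]
which solves \eqref{de:StEq} and converges thanks to exponential ergodicity: differentiating the variational equation $\dif J_t^x = \nabla g(X_t^x)\,J_t^x\,\dif t$ and using \eqref{a2} pathwise yields $\|J_t^x\|_{op}\le e^{-\theta_0 t}$, whence $|P_t h(x)-P_t h(y)|\le\|\nabla h\|\,e^{-\theta_0 t}|x-y|$. Taking the derivative under the integral, $\nabla_u P_t h(x) = \E\Ll \nabla h(X_t^x), J_t^x u\Rr$, and integrating the exponentially decaying factor yields \eqref{e:f'Est>}.

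For \eqref{e:D2FEst>}, the genuine obstacle is that a brute-force second differentiation of $P_t h$ would require $\nabla^2 h$, which does not exist for a merely Lipschitz test function. The plan is to invoke Bismut's integration-by-parts formula \eqref{de:IBP} to trade one spatial derivative for a stochastic integral against $B_t$. Writing $P_t = P_{t/2}\circ P_{t/2}$, I would use the chain rule on the inner $P_{t/2}$ to push one derivative onto $\nabla h$, producing $\nabla(P_{t/2}h)$ with sup norm $\le\|\nabla h\|\,e^{-\theta_0 t/2}$, and apply Bismut's formula on the outer $P_{t/2}$ for the remaining derivative, at the price of a Malliavin weight of $L^2$ norm $O(t^{-1/2})$. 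The outcome is
\[
\sup_{x\in\R^d}\bigl|\Ll \nabla^2 P_t h(x), u_1 u_2^{\rm T}\Rr_{\rm HS}\bigr| \;\le\; C_\theta\,\frac{e^{-\theta_0 t/2}}{\sqrt{t}}\,\|\nabla h\|\,|u_1|\,|u_2|,
\]
which is integrable on $(0,\infty)$ and gives \eqref{e:D2FEst>}.

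The log-Hölder estimate \eqref{e:D2FEst2>} then follows by splitting
$\nabla^2 f(x+\e u)-\nabla^2 f(x)\;=\;-\int_0^\infty\bigl[\nabla^2 P_t h(x+\e u)-\nabla^2 P_t h(x)\bigr]\,\dif t$ at $t=\e^2$. For $t\le \e^2$ the raw second-derivative bound above controls the increment by twice its sup, contributing a constant times $\int_0^{\e^2}t^{-1/2}\,\dif t \asymp |\e|$. For $t\ge \e^2$ I would iterate the Bismut device once more (e.g., writing $P_t = P_{t/3}\circ P_{t/3}\circ P_{t/3}$ and applying Bismut on two of the three factors) to produce $\|\nabla^3 P_t h\|\le C_\theta\,e^{-\theta_0 t/3}\|\nabla h\|/t$; the mean-value theorem then contributes at most a constant times $|\e|\int_{\e^2}^\infty t^{-1}e^{-\theta_0 t/3}\,\dif t\;\lesssim\;|\e|(|\log|\e||\vee 1)$, which is exactly where the logarithmic factor enters.

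The principal difficulty, I expect, is making each Bismut argument rigorous under Assumption~\ref{a:A}, where $g$ is only polynomially dissipative (through the $\theta_1|x|^{\theta_2}$ term) and $\nabla g$ grows polynomially by \eqref{a3}. Justifying the integration by parts, controlling the $L^p$ moments of the Jacobian flow $J_t^x$ together with its first spatial derivative (needed when differentiating the Malliavin weight for the third-derivative bound), and getting every estimate \emph{uniformly in} the base point $x\in\R^d$---to match the supremum norms in \eqref{e:f'Est>}--\eqref{e:D2FEst2>}---is precisely what necessitates the dedicated Malliavin-calculus machinery developed in Sections~5--7.
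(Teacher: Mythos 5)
Your overall strategy works and is sound in spirit, but for the second and third bounds it is genuinely different from the paper's route, and the difference is worth noting.

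For \eqref{e:f'Est>} you do exactly what the paper does (Lemma \ref{l:Nabf}, eq.~\eqref{e:g'0} and \eqref{e:3.4-early}): differentiate $f(x)=-\int_0^\infty (P_th(x)-\mu(h))\,\dif t$ under the integral and use the pathwise Jacobian bound $\|J_{0,t}^x\|_{op}\le e^{-\theta_0 t}$ from \eqref{de:Jac1Est>}. The divergence is in how each argument secures integrability in $t$ once one derivative has to be obtained through Bismut's formula and the associated $O(t^{-1/2})$ Malliavin weight $\mathcal I_{u}^x(t)$. The paper \emph{abandons} the direct integral $\int_0^\infty \nabla^2 P_t h\,\dif t$ (the Remark after Proposition \ref{p:fRep} explicitly states it is not known to converge) and instead introduces the resolvent identity $f=(1-\mcl A)^{-1}(f+\mu(h)-h)=\int_0^\infty e^{-t}P_t(f+\mu(h)-h)\,\dif t$ (Proposition \ref{p:fRep}(2)); differentiating this and applying Bismut once yields \eqref{e:g''4}, where the $e^{-t}$ factor kills the tail and the integrand is controlled via a bootstrap with the already-established bound $\|\nabla f\|\le C_\theta\|\nabla h\|$. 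Your proposal instead keeps the direct representation and factorizes $P_t=P_{t/2}\circ P_{t/2}$, letting the inner half carry the exponential decay ($\|\nabla P_{t/2}h\|\le e^{-\theta_0 t/2}\|\nabla h\|$) so that the $e^{-\theta_0 t/2}/\sqrt t$ bound on $\nabla^2 P_t h$ is itself integrable; your treatment of \eqref{e:D2FEst2>} (three-fold splitting, third-derivative bound $\sim e^{-\theta_0 t/3}/t$, cut at $t=\e^2$) is likewise a semigroup-splitting analogue of the paper's use of the double weight $\mcl I_{u,u_2}^x(t)$ via \eqref{e:2ndDerEqn-3} together with the $e^{-t}$ damping. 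Both devices serve the same purpose and lead to the same $|\e|(|\log|\e||\vee 1)$ rate. Your route arguably has the virtue of being self-contained in $h$ (no resolvent, no bootstrap with $\|\nabla f\|$) and directly answers the paper's own concern about convergence of the undamped integral.

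One caution: the step ``apply Bismut on the outer $P_{t/2}$'' is less mechanical than it may look, because $\nabla$ does not commute with $P_s$ when the drift is nonlinear --- the Jacobian $J_{0,s}^x$ is random --- so $\nabla^2\bigl(P_{t/2}(P_{t/2}h)\bigr)$ does not simply split into ``one chain-rule derivative on $P_{t/2}h$ plus one Bismut derivative on the outer factor.'' When you differentiate a Bismut representation $\E[\psi(X_s^x)\mcl I_u^x(s)]$ in $x$, a second term $\E[\psi(X_s^x)\nabla_{u'}\mcl I_u^x(s)]$ appears which is not obviously controlled uniformly in $x$; the paper's Lemma \ref{l:BEL}, which manipulates path-dependent functionals $\langle\nabla\phi(X_t^x),\nabla_{u_1}X_t^x\rangle$ rather than functions of $X_t^x$ alone, is precisely designed to handle this. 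You flag this difficulty in your final paragraph, so it is anticipated rather than overlooked, but it deserves to be recognized as the place where the bulk of the Malliavin-calculus work in Sections 5--7 (Lemmas \ref{l:GraDeX}, \ref{l:LambEst}, \ref{l:BEL}) is irreducible.
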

\begin{rem}
Gorham et. al. \cite{Go17} recently put forward a method to measure sample quality with diffusions by a Stein discrepancy, in which the same Stein equation as \eqref{de:StEq} has to be considered. Under the assumption that $g$ is {\em 3rd order differentiable},  they used the Bismut-Elworthy-Li formula \cite{ElLi94}, together with smooth convolution and interpolation techniques, to prove a bound on the first, second and $(3-\epsilon)$th derivative of $f$ for $\epsilon>0$.
They can also obtain the bound \eqref{e:D2FEst2>} by their approach (personal communication \cite{Les18}), albeit due to the interpolation argument therein, the assumption of 3rd order differentiability of $g$ can not be removed. 
\end{rem}

We defer the proof of Theorem \ref{t:D2FEst} to Sections 6 and 7 by deriving stochastic representations of $f$ and its derivatives as follows:
\Bes
f(x)\ = \ \int_0^\infty e^{-t} \E[f(X_t^x)+\mu(h)-h(X_t^x)] \dif t,
\Ees
\Bes
\begin{split}
\nabla_u f(x)\ = \ \int_0^\infty e^{-t} \E\left\{\big[f(X_t^x)-h(X_t^x)+\mu(h)\big]   \mcl I_u^x(t)\right\} \dif t,
\end{split}
\Ees
where $X_t^x$ is the stochastic process determined by SDE \eqref{de:OU} and $\mcl I_u^x (t)$ is a stochastic integral. The representation of $\nabla^2 f(x)$ is more complicated and can be found in \eq{e:g''4}.
\\

With the regularity result in Theorem \ref{t:D2FEst}, we are in a position to prove our main result.
\begin{proof} [{\bf Proof of Theorem \ref{t:MThm}}]
From the fact that $W$ and $W'$ have the same distribution and using Taylor's expansion, we have
\ \
\Bes
\begin{split}
0&\ = \ \E\left[f(W')-f(W)\right] \\
&\ = \ \E\left[\Ll \delta, \nabla f(W) \Rr\right]+\frac 12  \int_0^1\E\left[\langle  \delta \delta^{\rm T}, \nabla^2  f(W+t \delta) \rangle_{{\rm HS}}\right] \dif t.
\end{split}
\Ees
By \eq{e:A1} and \eq{e:A2}, we have
\ \ \
\Bes
\begin{split}
\E\left[\Ll \delta, \nabla f(W) \Rr\right]&\ = \ \E \left[\Ll\E\left(\delta|W\right), \nabla f(W)\Rr\right]  \\
&\ = \ \lambda \E \left[\Ll g(W), \nabla f(W)\Rr\right]+ \lambda \E \left[\Ll R_1, \nabla f(W)\Rr\right]
\end{split}
\Ees
and
\ \ \
\Bes
\begin{split}
& \ \ \ \ \int_0^1\E\left[\langle  \delta \delta^{\rm T}, \nabla^2  f(W+t \delta) \rangle_{{\rm HS}}\right] \dif t \\
&\ = \ \E\left[\langle  \delta \delta^{\rm T}, \nabla^2  f(W) \rangle_{{\rm HS}}\right]+\int_0^1 \E\left[\langle  \delta \delta^{\rm T}, \nabla^2  f(W+t \delta)-\nabla^2  f(W)  \rangle_{{\rm HS}}\right] \dif t \\
&\ = \ \E\left[\langle  \E\left(\delta \delta^{\rm T}|W\right), \nabla^2  f(W) \rangle_{{\rm HS}}\right]+\int_0^1 \E\left[\langle  \delta \delta^{\rm T}, \nabla^2  f(W+t \delta)-\nabla^2  f(W)  \rangle_{{\rm HS}}\right] \dif t \\
&\ = \ 2 \lambda \E\left[\Delta  f(W)\right]+ 2\lambda \E\left[\langle R_2, \nabla^2f(W)\rangle_{{\rm HS}}\right]+\int_0^1 \E\left[\langle  \delta \delta^{\rm T}, \nabla^2  f(W+t \delta)-\nabla^2  f(W)  \rangle_{{\rm HS}}\right] \dif t.
\end{split}
\Ees
Combining the previous three equations, we obtain
\ \ \
\Bes
\begin{split}
 \E\left[\Delta f(W)+\Ll g(W), \nabla f(W)\Rr\right]
&\ = \ -\E \left[\Ll R_1, \nabla f(W)\Rr\right]-\E\left[\langle R_2, \nabla^2f(W)\rangle_{{\rm HS}}\right] \\
&\ \ \ \ \ -\frac{1}{2\lambda} \int_0^1 \E\left[\langle  \delta \delta^{\rm T}, \nabla^2  f(W+t \delta)-\nabla^2  f(W)  \rangle_{{\rm HS}}\right] \dif t.
\end{split}
\Ees
By \eqref{de:StEq}, we have
\ \ \ \
\Bes
\begin{split}
 \left|\E[h(W)]-\mu(h)\right|& \ \le \ \left|\E\left\{\Ll R_1, \nabla f(W)\Rr\right\}\right|+\left|\E\left[\langle R_2, \nabla^2 f(W)\rangle_{{\rm HS}}\right]\right| \\
&+\frac{1}{2\lambda}\int_0^1 \left|\E\left[\langle  \delta \delta^{\rm T}, \nabla^2  f(W+t \delta)-\nabla^2  f(W)  \rangle_{{\rm HS}}\right]\right| \dif t.
\end{split}
\Ees
By \eqref{e:f'Est>}, we have
\ \ \
\Bes
 \left|\E\left[\Ll R_1, \nabla f(W)\Rr\right]\right| \ \le \ C_\theta \|\nabla h\| \E\left[|R_1|\right].
\Ees
By \eq{e:D2FEst>} and \eqref{e:RelEigHS}, we have
 \Be
  \left|\E\left[\langle R_2, \nabla^2 f(W)\rangle_{{\rm HS}}\right]\right| \ \le \   \sup_{x \in \R^{d}}||\nabla^2 f(x)||_{{\rm HS}}\E\left[||R_2||_{{\rm HS}}\right] \ \le \ C_{\theta} \sqrt d \|\nabla h\| \E\left[||R_2||_{{\rm HS}}\right].
 \Ee
If $R_{2}$ has the form \eqref{e:SpeR2}, by \eqref{e:D2FEst>} we have
 \Be
  \left|\E\left[\langle R_2, \nabla^2 f(W)\rangle_{{\rm HS}}\right]\right| \ = \ \left|\E\left[\sum_{i=1}^{p} \Ll r_{2i-1} r^{\rm T}_{2i}, \nabla^2 f(W)\rangle_{{\rm HS}}\right]\right|\ \le \ C_\theta \|\nabla h\| \sum_{i=1}^{p}\E\left[|r_{2i-1}||r_{2i}|\right].
\Ee
Moreover, by \eqref{e:D2FEst2>} we have
\ \ \ \
\Be\label{101}
\begin{split}
& \ \ \ \int_0^1 \left|\E\left[\langle  \delta \delta^{\rm T}, \nabla^2  f(W+t \delta)-\nabla^2  f(W)  \rangle_{{\rm HS}}\right]\right| \dif t \\
&\ = \ \int_0^1 \left|\E\left[\left\langle  \delta \delta^{\rm T}, \nabla^2  f\left(W+|\delta| \frac{t \delta}{|\delta|}\right)-\nabla^2  f(W)  \right\rangle_{{\rm HS}}\right]\right| \dif t \\
& \ \le \ C_\theta  \|\nabla h\| \E\left[|\delta|^3 \left(|\log|\delta|| \vee 1\right)\right].
\end{split}
\Ee
Combining the inequalities above, we obtain \eqref{e:EstDel-0} and \eqref{e:EstDel}.
\end{proof}

\section{An application: Unadjusted Langevin algorithm}


We consider the problem of sampling a probability distribution $\mu$ that has the density
\be{
Ke^{-U(x)},
}
where $U(x)$ is a given function, but the normalizing constant $K$ is unknown.
This problem is encountered in Bayesian inference, where $\mu$ is a posterior distribution, and in statistical physics, where $\mu$ is the distribution of particle configurations.
As $K$ is unknown, we cannot sample from $\mu$ directly.
The so-called unadjusted Langevin algorithm (ULA) with fixed step size is as follows.
We refer to \cite{RoTw96, Da17} and the references therein for more details.
Regard $\mu$ as the stationary distribution of the Langevin stochastic differential equation
\be{
dX_t\ = \ g(X_t) dt+\sqrt{2} dB_t,
}
where $g(\cdot)=-\nabla U(\cdot)$ and $B_t$ is a standard $d$-dimensional Brownian motion.
The Euler-Maruyama discretization of $X_t$ with step size $s$ is
\ben{\label{11}
Y_{k+1}\ = \ Y_k+s g(Y_k)+\sqrt{2s} Z_{k+1},
}
where $Y_0$ is an arbitrary initial value and $Z_1, Z_2, \dots$ are independent and identically distributed standard $d$-dimensional Gaussian random vectors. See Remark \ref{r2} below for other possible choices of $\{Z_i\}$. We assume that $\{Y_k\}$ has an invariant measure $\mu_s$. The existence of $\mu_s$ has been extensively studied in the literature.
See, for example, \cite{DWT95, RoTw96, Da17}.
In particular, Dalalyan \cite{Da17} showed that $\mu_s$ exists for sufficiently small $s$, provided that $\mu$ is strongly log-concave, and $g$ is Lipschitz.
The so-called ULA with fixed step size uses the Markov chain Monte Carlo method to sample from $\mu_s$, then claims that $\mu_s$ is close enough to $\mu$ for a small $s$.

There is a tradeoff in the choice of step size $s$.
When $s$ becomes smaller, $\mu_s$ is closer to $\mu$, but it takes longer for the Markov chain to reach stationarity, and vice versa.
Therefore, it is of interest to quantify the distance between $\mu_s$ and $\mu$ for a given $s$.

Using our general theorem, we obtain the following result. The step size $s$ is typically small, and for ease of presentation, we assume that $s<1/e$.

\begin{theorem}
Under the above setting,
suppose $g(\cdot)$ satisfies Assumption \ref{a:A}.
For $s<1/e$, we have
\besn{\label{12}
d_{\mcl W}(\mu_s, \mu)\ \le \  & C_\theta
\sqrt s\left\{|\log s|\E |Z_1|^3+s^{3/2} |\log s| \E |g(W)|^3+s^{1/2}  \E |g(W)|^2+{\E\left[|\tl \delta|^3 |\log |\tilde \delta||\right]}\right\},
}
where $W\sim \mu_s$, $W$ is independent of $Z_1$, and $\tl \delta =\sqrt s g(W)+\sqrt{2} Z_1$.
\end{theorem}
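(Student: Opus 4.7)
The natural approach is to apply Theorem \ref{t:MThm} to the exchangeable pair built from one step of the Markov chain \eqref{11} at stationarity. Let $W \sim \mu_s$, let $Z_1 \sim N(0, I_d)$ be independent of $W$, and set $W' := W + s g(W) + \sqrt{2s}\, Z_1$. Invariance of $\mu_s$ under one iteration of \eqref{11} gives $W' \stackrel{d}{=} W$, which is precisely R\"ollin's generalized exchangeable-pair condition used in Theorem \ref{t:MThm}.

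Next I would compute the two conditional moments required by \eqref{e:A1}--\eqref{e:A2}. Setting $\delta := W' - W = s g(W) + \sqrt{2s}\, Z_1$ and using $Z_1 \perp W$, $\E Z_1 = 0$, $\E Z_1 Z_1^{\rm T} = I_d$, one obtains
\[
\E[\delta \mid W] = s\, g(W), \qquad \E[\delta \delta^{\rm T} \mid W] = 2 s\, I_d + s^2 g(W) g(W)^{\rm T},
\]
so \eqref{e:A1} holds with $\lambda = s$, $R_1 \equiv 0$, and \eqref{e:A2} holds in the special form \eqref{e:SpeR2} with $p=1$, $r_1 = (s/2) g(W)$, $r_2 = g(W)$, giving $\E[|r_1||r_2|] = (s/2) \E|g(W)|^2$. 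Feeding these into \eqref{e:EstDel} yields
\[
d_{\mcl W}(\mu_s, \mu) \le C_\theta \Big\{ \tfrac{1}{s}\, \E\big[|\delta|^3 (|\log|\delta|| \vee 1)\big] + \tfrac{s}{2}\, \E|g(W)|^2 \Big\},
\]
whose second summand already matches the $\sqrt{s}\cdot s^{1/2} \E|g(W)|^2$ term of \eqref{12}.

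The crux of the proof is converting the apparently explosive $1/s$ factor in front of the cubic-log term into the correct scale $\sqrt{s}$. To do this I would exploit the scaling identity $\delta = \sqrt{s}\, \tilde\delta$, with $\tilde\delta = \sqrt{s}\, g(W) + \sqrt{2}\, Z_1$ as defined in the theorem, so that $|\delta|^3 = s^{3/2} |\tilde\delta|^3$ and $\log|\delta| = \tfrac{1}{2}\log s + \log|\tilde\delta|$. Since $s < 1/e$ forces $|\log s| \ge 1$, the triangle inequality then gives $|\log|\delta|| \vee 1 \le |\log s| + |\log|\tilde\delta||$, so that
\[
\tfrac{1}{s}\, \E\big[|\delta|^3 (|\log|\delta|| \vee 1)\big] \le \sqrt{s}\, |\log s|\, \E|\tilde\delta|^3 + \sqrt{s}\, \E\big[|\tilde\delta|^3 |\log|\tilde\delta||\big].
\]
The last term on the right is the residual appearing in \eqref{12}; for the first, the elementary inequality $(a+b)^3 \le 4(a^3 + b^3)$ yields $\E|\tilde\delta|^3 \le C(\E|Z_1|^3 + s^{3/2}\, \E|g(W)|^3)$, reproducing the first two summands of \eqref{12}.

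The ``hard part'' is really the conceptual one of spotting the rescaling $\delta \mapsto \tilde\delta$; without it the bound from Theorem \ref{t:MThm} looks divergent as $s \to 0$, whereas in the $\tilde\delta$ variable one immediately sees the expected $\sqrt{s}\,|\log s|$ order. The only background fact needed is finiteness of $\E|g(W)|^k$ for $k = 2, 3$ under $\mu_s$, which follows from the polynomial growth bound \eqref{a1b} together with standard moment estimates for the invariant measure of the Euler recursion \eqref{11}.
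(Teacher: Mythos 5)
Your proposal is correct and follows essentially the same route as the paper's proof: construct $W' = W + sg(W) + \sqrt{2s}Z_1$ at stationarity, compute $\E[\delta\mid W] = sg(W)$ and $\E[\delta\delta^{\rm T}\mid W] = 2sI_d + s^2 g(W)g(W)^{\rm T}$, apply \eqref{e:EstDel} with $\lambda = s$, $R_1=0$ and the rank-one form \eqref{e:SpeR2} of $R_2$, and then rescale $\delta = \sqrt{s}\,\tilde\delta$ to split $|\log|\delta||\vee 1 \le |\log s| + |\log|\tilde\delta||$ (using $|\log s|\ge 1$) and expand $\E|\tilde\delta|^3$ via $(a+b)^3 \le 4(a^3+b^3)$. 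The only superficial difference from the paper is the labeling of $r_1, r_2$ (the paper takes $r_1 = g(W)$, $r_2 = (s/2)g^{\rm T}(W)$), which is immaterial to the bound.
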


\begin{remark}\label{r2}
As $s\to 0$, the leading-order term in the upper bound of \eq{12} decays as $d^{3/2} s^{1/2} $ up to a logarithmic factor.
Upper bounds between $\mu_s$ and $\mu$ for the stronger Wasserstein-2 distance have been obtained in the literature.
 See, for example, \cite{Da17, DuMo18, Bo18}.
 In particular, \cite[Corollary 9]{DuMo18} obtained a bound of the order $O(d s)$ under a slightly different set of conditions on $g$, which does not cover, say, Example 2 below Assumption \ref{a:A}.
 Their bound shows a lower computational complexity to achieve certain precision of the ULA.
A possible way to improve our bound, which holds as long as $\{Z_i\}$ has mean 0 and covariance matrix $I_d$, is to do another Taylor's expansion in \eq{101} and make use of the symmetry condition of the Gaussian (or Rademacher as in \cite{Bo18}) vector $Z_1$, that is, $\E Z_{1i}Z_{1j}Z_{1k}=0$ for any $i,j,k\in \{1,\dots, d\}$.

\end{remark}




\begin{proof}
Suppose $Y_0\sim \mu_s$ in \eq{11}.
Let $W=Y_0$ and $W'=Y_{1}$.
Because $\mu_s$ is the stationary distribution of the Markov chain \eq{11},
$W$ and $W'$ have the same distribution.
With
\be{
\delta\ = \ W'-W\ = \ sg(W)+\sqrt{2s} Z_{1},
}
we have
\be{
\E(\delta|W)\ = \ sg(W)
}
and
\be{
\E(\delta \delta^{\rm T}|W)\ = \ 2s I_d+ s^2 g(W)g^{\rm T}(W).
}
In applying Theorem \ref{t:MThm}, 
\be{
\lambda\ = \ s, \quad R_1\ = \ 0,\quad R_{2}=\frac s 2  g(W)g^{\rm T}(W),
}
thus,  $p=1$, $r_{1}=g(W)$ and $r_{2}=\frac s2 g^{\rm T}(W)$. 
We have
\be{
 \E \left[|r_{1}||r_{2}|\right]\ \le \  \frac{s}{2} \E|g(W)|^2.
}
Recall $\tl \delta=\frac{\delta}{\sqrt s}$ and write $g:=g(W)$. We have
\be{
\begin{split}
\frac{1}{\lambda}\E [|\delta|^3(|\log |\delta||\vee 1)]& \ \le \ \sqrt s \E\left[|\tl \delta|^3(|\log s|+|\log |\tilde \delta||)\right] \\
&\ \le \ \sqrt s|\log s| \E |\tl \delta|^3+\sqrt s \E\left[|\tl \delta|^3 |\log |\tilde \delta||\right].
\end{split}
}
Moreover,
$$\E |\tl \delta|^3 \ = \ \E |\sqrt s g+Z_1|^3 \ \le \ 4 s^{3/2} \E |g|^3+4 \E |Z_1|^3;$$
hence,
\be{
\begin{split}
\frac{1}{\lambda}\E [|\delta|^3(|\log |\delta||\vee 1)]& \ \le \ \sqrt s \E\left[|\tl \delta|^3(|\log s|+|\log |\tilde \delta||)\right] \\
&\ \le \ 4 \sqrt s\left\{s^{3/2}|\log s|  \E |g|^3+|\log s|\E |Z_1|^3+\E\left[|\tl \delta|^3 |\log |\tilde \delta||\right]\right\}.
\end{split}
}
The theorem is proved by applying the above bounds in \eqref{e:EstDel}.
\end{proof}

\section{Preliminary: Malliavin calculus of SDE \eqref{de:OU}} \label{s:SDE}
From this section, we start our journey toward proving the crucial Theorem \ref{t:D2FEst}. We use Bismut's approach to Malliavin calculus.
To this end, we first provide a brief review of Malliavin calculus in this section; the proofs of the related lemmas are deferred to Section \ref{s:ProofSect5}.
Throughout the remaining sections, let $X_t^x$ be the solution to SDE \eqref{de:OU}, where $g$ satisfies Assumption \ref{a:A}.
\subsection{Jacobi flow associated with SDE \eqref{de:OU} (\cite{Sa01})}
We consider the derivative of $X_t^x$ with respect to initial value $x$, which is called the Jacobian flow.
Let $u \in \R^d$,  the Jacobian flow $\nabla_u X_t^x$ along the direction $u$ is defined by
\
\be{
\nabla_u X_t^x\ = \ \lim_{\e \rightarrow 0} \frac{X_t^{x+\e u}-X_t^x}{\e}, \ \ \ \ t \ge 0.
}
The above limit exists and satisfies
\
\Be  \label{e:DuXt}
\frac{\dif}{\dif t} \nabla_u X_t^x\ = \ \nabla g(X_t^x) \nabla_u X_t^x, \ \ \ \ \ \nabla_u X_0^x\ = \ u,
\Ee
which is solved by
\
\Be  \label{de:Jxt}
\nabla_u X_t^x\ = \ \exp\left\{\int_0^t \nabla g(X_r^x) \dif r\right\} u.
\Ee
For further use, we denote
\ \ \
\Be  \label{e:JstX}
J_{s,t}^{x}\ = \ \exp\left\{\int_s^t \nabla g(X_r^x) \dif r\right\}, \ \ \ 0 \le s \le t<\infty.
\Ee
It is easy to see that $J_{s,t}^{x} J_{0,s}^{x}=J_{0,t}^{x}$ for all $0 \le s \le t<\infty$ and
\Bes
\begin{split}
\nabla_u X_t^x\ = \ J_{0,t}^xu.
\end{split}
\Ees
For $u_1,u_2 \in \R^d$, we can similarly define $\nabla_{u_2} \nabla_{u_1}  X_t^x$, which satisfies
\
\Be  \label{e:Du12Xt}
\frac{\dif}{\dif t} \nabla_{u_2}\nabla_{u_1}  X_t^x\ = \ \nabla g(X_t^x)\nabla_{u_2}\nabla_{u_1}  X_t^x+\nabla^2 g(X_t^x) \nabla_{u_2} X_t^x\nabla_{u_1} X_t^x
\Ee
with $\nabla_{u_2}\nabla_{u_1}  X_0^x=0$.  \\

The following lemmas give estimates of $X_t^x$, $\nabla_{u_1}  X_t^x$ and $\nabla_{u_2}\nabla_{u_1}  X_t^x$ and the proofs are given in Section 8.
\
\begin{lem} \label{l:SolPro}
We have
\
\be{ 
\E|X_t^x|^2 \ \le \ e^{- \theta_{0} t} |x|^{2}+\frac{2d+|g(0)|^2/\theta_0}{\theta_{0}}.
}
This further implies that the ergodic measure $\mu$ has finite 2nd moment and
\Be \label{e:2ndMom}
\begin{split}
\int_{\R^{d}} |x|^2 \mu(\dif x) \   \le \  \frac{2d+|g(0)|^2/\theta_0}{\theta_{0}}.
\end{split}
\Ee
\end{lem}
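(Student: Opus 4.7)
The plan is to apply It\^o's formula to $|X_t^x|^2$, take expectations so that the stochastic integral drops out, use the dissipativity assumption to obtain a linear differential inequality for $\E|X_t^x|^2$, and close by Gr\"onwall's lemma. For the bound on the invariant second moment I would pass to the limit $t\to\infty$, combining the uniform-in-$t$ moment estimate with the weak convergence $X_t^x\Rightarrow\mu$ stated in Remark~\ref{r1}.

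In more detail, after a routine localization by the stopping times $\tau_n=\inf\{t\ge 0: |X_t^x|\ge n\}$ (justified because the polynomial growth \eqref{a1b} together with pathwise Gr\"onwall gives finite moments for the stopped process uniformly in $n$), It\^o's formula yields
\Bes
\dif |X_t^x|^2 \ = \ 2\Ll X_t^x, g(X_t^x)\Rr \dif t + 2d\, \dif t + 2\sqrt 2\, \Ll X_t^x, \dif B_t\Rr.
\Ees
Taking expectations gives $\tfrac{\dif}{\dif t}\E|X_t^x|^2 = 2\E\Ll X_t^x, g(X_t^x)\Rr + 2d$.

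The key pointwise input is the dissipativity inequality \eqref{a1a} (discarding the nonnegative $\theta_1$-term), which reads $\Ll y, g(y)-g(0)\Rr \le -\theta_0 |y|^2$. Combining this with Young's inequality in the form $2\Ll y, g(0)\Rr \le \theta_0|y|^2 + |g(0)|^2/\theta_0$ yields $2\Ll y, g(y)\Rr \le -\theta_0|y|^2 + |g(0)|^2/\theta_0$, so that
\Bes
\tfrac{\dif}{\dif t}\E|X_t^x|^2 \ \le \ -\theta_0\, \E|X_t^x|^2 + \tfrac{|g(0)|^2}{\theta_0} + 2d.
\Ees
Gr\"onwall's lemma then produces the first displayed bound of the lemma. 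For the second assertion, $X_t^x$ converges weakly to $\mu$ by Remark~\ref{r1}, and since $y\mapsto |y|^2$ is nonnegative and continuous, Fatou's lemma for weak convergence gives $\int_{\R^d}|y|^2\mu(\dif y)\le \liminf_{t\to\infty}\E|X_t^x|^2$; the right-hand side of the first bound tends to the stated constant as $t\to\infty$.

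The only mildly non-routine point is the localization argument used to kill the stochastic integral after It\^o, needed because Assumption~\ref{a:A} does not assume that $g$ is bounded, only that it has polynomial growth via \eqref{a1b}. Once that is dispatched in the standard way, everything reduces to a one-line ODE comparison together with weak convergence.
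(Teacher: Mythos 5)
Your proposal follows essentially the same route as the paper: Itô's formula for $|X_t^x|^2$, the dissipativity bound \eqref{a1a} together with Cauchy/Young to get $\frac{\dif}{\dif t}\E|X_t^x|^2\le -\theta_0\E|X_t^x|^2 + 2d + |g(0)|^2/\theta_0$, then Grönwall for the first bound; for the second moment the paper carries out the truncation $\E[|X_t^x|^2\chi(|X_t^x|/R)]$, lets $t\to\infty$ by weak convergence, then $R\to\infty$, which is precisely the proof of the ``Fatou for weak convergence'' lemma you invoke as a black box. Your explicit mention of the stopping-time localization to discard the stochastic integral is sound and merely makes explicit a step the paper leaves implicit.
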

\begin{lem}  \label{l:JacDev}
For all $u_1,u_2 \in \R^d$ and $x \in \R^d$, we have the following (deterministic) estimates:
\Be \label{de:Jac1Est>}
|\nabla_{u_1}  X_t^x|  \ \le \  e^{-\theta_{0} t}|u_1|,
\Ee
\Be \label{de:Jac2Est>}
|\nabla_{u_2}\nabla_{u_1}  X_t^x|  \ \le \ C_{\theta}  |u_1| |u_2|.
\Ee
\end{lem}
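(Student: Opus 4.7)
\textbf{Proof proposal for Lemma \ref{l:JacDev}.}
Both estimates are pathwise Gronwall arguments applied directly to the variational ODEs \eqref{e:DuXt} and \eqref{e:Du12Xt}; the only inputs are the two parts of Assumption \ref{a:A}. For the first-order bound \eqref{de:Jac1Est>}, set $Y_t:=\nabla_{u_1}X_t^x$, so by \eqref{e:DuXt} we have $\dot Y_t=\nabla g(X_t^x)Y_t$ and $Y_0=u_1$. Differentiating $|Y_t|^2$ and inserting \eqref{a2} gives
\[
\frac{\dif}{\dif t}|Y_t|^2 \ = \ 2\langle Y_t,\nabla_{Y_t}g(X_t^x)\rangle \ \le \ -2\theta_0(1+\theta_1|X_t^x|^{\theta_2})|Y_t|^2 \ \le \ -2\theta_0|Y_t|^2,
\]
and Gronwall yields $|Y_t|\le e^{-\theta_0 t}|u_1|$.

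For the second-order bound \eqref{de:Jac2Est>}, put $Z_t:=\nabla_{u_2}\nabla_{u_1}X_t^x$, which by \eqref{e:Du12Xt} satisfies the linear inhomogeneous ODE $\dot Z_t=\nabla g(X_t^x)Z_t+A_t$ with $Z_0=0$ and $A_t:=\nabla^2 g(X_t^x)\nabla_{u_2}X_t^x\nabla_{u_1}X_t^x$. On $\{|Z_t|>0\}$, project $\dot Z_t$ onto $Z_t/|Z_t|$ and use \eqref{a2}, \eqref{a3} together with the already proved \eqref{de:Jac1Est>} to obtain
\[
\frac{\dif}{\dif t}|Z_t| \ \le \ -\theta_0(1+\theta_1|X_t^x|^{\theta_2})|Z_t|+\theta_3(1+\theta_1|X_t^x|)^{\theta_2-1}e^{-2\theta_0 t}|u_1||u_2|.
\]
Variation of parameters then gives the deterministic bound
\[
|Z_t|\ \le \ \theta_3|u_1||u_2|\,e^{-\theta_0 t}\int_0^t(1+\theta_1|X_s^x|)^{\theta_2-1}e^{-\theta_0 s}\,e^{-\theta_0\theta_1\int_s^t|X_r^x|^{\theta_2}\dif r}\dif s.
\]

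The delicate step is showing that this integral is bounded by some $C_\theta$ uniformly in the trajectory $(X_r^x)$. When $\theta_2\le 1$ or $\theta_1=0$ the factor $(1+\theta_1|X_s^x|)^{\theta_2-1}\le 1$, and the bound $\int_0^t e^{-\theta_0 s}\dif s\le 1/\theta_0$ is immediate. When $\theta_2>1$ and $\theta_1>0$, I would use $(1+\theta_1|X_s^x|)^{\theta_2-1}\le C_\theta(1+|X_s^x|^{\theta_2})$ via $a^{\theta_2-1}\le 1+a^{\theta_2}$, and then convert the dangerous $|X_s^x|^{\theta_2}$-part of the integrand into an exact derivative via
\[
|X_s^x|^{\theta_2}e^{-\theta_0\theta_1\int_s^t|X_r^x|^{\theta_2}\dif r}\ = \ \frac{1}{\theta_0\theta_1}\frac{\dif}{\dif s}e^{-\theta_0\theta_1\int_s^t|X_r^x|^{\theta_2}\dif r},
\]
followed by an integration by parts against $e^{-\theta_0 s}$. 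This absorbs the polynomial growth in $|X_s^x|$ into the dissipative exponential weight and yields the required $C_\theta|u_1||u_2|$ (in fact with an extra $e^{-\theta_0 t}$ that is not needed in the statement). The main subtlety is that the $-\theta_0\theta_1|X_t^x|^{\theta_2}|Z_t|^2$ term in \eqref{a2}, which was harmlessly dropped when proving \eqref{de:Jac1Est>}, has to be kept here in order to tame the polynomial growth coming from the inhomogeneity.
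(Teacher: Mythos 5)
Your proof is correct, but it takes a genuinely different (and slightly more laborious) route in the second-order bound than the paper does. After arriving at the differential inequality, the paper squares: it works with $\frac{\dif}{\dif t}|\zeta(t)|^2$ and then applies Young's (Cauchy's) inequality to the cross term $C_\theta(1+\theta_1|X_t^x|^{\theta_2})|\zeta(t)||u_1||u_2|$, absorbing half of it into the dissipation. The key gain is that the remaining forcing term becomes \emph{exactly proportional} to the dissipation rate $a(s):=\theta_0(1+\theta_1|X_s^x|^{\theta_2})$, so the variation-of-constants integral telescopes trivially via $\int_0^t e^{-\int_s^t a(r)\dif r}a(s)\dif s=1-e^{-\int_0^t a(r)\dif r}\le 1$ and the entire case analysis on $\theta_1,\theta_2$ disappears. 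You instead work with $|Z_t|$ directly and keep the Jacobian decay $e^{-2\theta_0 s}$ in the inhomogeneity; this buys you a sharper conclusion $|Z_t|\le C_\theta e^{-\theta_0 t}|u_1||u_2|$ (not needed for the lemma), but at the cost of the forcing no longer being proportional to the dissipation, which forces the case split and the ad hoc integration by parts against $e^{-\theta_0\theta_1\int_s^t|X_r^x|^{\theta_2}\dif r}$. Both arguments are correct and rely on the same essential observation you identify at the end — the $\theta_1|X|^{\theta_2}$ part of the dissipation in \eqref{a2} must be retained to cancel the polynomial growth of $\nabla^2 g$ — and your conversion of $|X_s^x|^{\theta_2}e^{-\theta_0\theta_1\int_s^t|X_r^x|^{\theta_2}\dif r}$ into an exact $s$-derivative is precisely the pathwise mechanism that the paper packages into the clean telescoping identity. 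One small technical caveat in your version: differentiating $|Z_t|$ on the set $\{|Z_t|>0\}$ (recall $Z_0=0$) should be justified, e.g.\ by passing through $\sqrt{|Z_t|^2+\e^2}$ or by invoking the a.e.\ one-sided derivative of the absolutely continuous function $t\mapsto|Z_t|$; working with $|Z_t|^2$ as the paper does sidesteps this entirely.
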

\vskip 3mm
\subsection{Bismut's approach to Malliavin calculus for SDE \eqref{de:OU} (\cite{No86})}
Let $v \in L_{loc}^2([0,\infty) \times (\Omega, \mcl F, \PP);\R^d)$, i.e., $\E\int_0^t |v(s)|^2 ds<\infty$ for all $t>0$.
Further assume that $v$ is adapted to
the filtration $(\mcl F_{t})_{t \ge 0}$ with $\mcl F_t:=\sigma(B_s:0 \le s \le t)$; i.e., $v(t)$ is $\mcl F_t$ measurable for $t \ge 0$.
Define
\Be  \label{e:VtDef}
V_{t}\ =\ \int_0^t v(s) \dif s, \ \ \ \ \ \ t \ge 0.
\Ee
For a $t>0$, let $F_{t}: \mcl C([0,t],\R^{d}) \rightarrow \R$ be a $\mcl F_{t}$ measurable map. If the following limit exists
\Bes
D_V F_{t}(B)\ =\ \lim_{\e \rightarrow 0}\frac{F_{t}(B+\e V)-F_{t}(B)}{\e}
\Ees
in $L^2((\Omega,\mcl F,\PP);\R)$, then $F_{t}(B)$ is said to be \emph{Malliavin differentiable} and $D_VF_{t}(B)$ is called the Malliavin derivative of
$F_{t}(B)$ in the direction $v$; see \cite[p. 1011]{HaMa06}.

Let $F_{t}(B)$ and $G_{t}(B)$ both be Malliavin differentiable, then the following product rule holds:
\Be  \label{de:ProRul}
D_V(F_{t}(B)G_{t}(B))\ =\ F_t(B) D_V G_t(B)+G_t(B) D_V F_t(B).
\Ee
When
$$F_{t}(B)\ = \ \int_0^t \Ll a(s), \dif B_s\Rr,$$
where $a(s)=(a_1(s),...,a_d(s))$ is a deterministic function such that $\int_{0}^{t} |a(s)|^{2} \dif s<\infty$ for all $t>0$, it is easy to check that
\Bes
D_V F_{t}(B)\ =\ \int_0^t \Ll a(s),  v(s)\Rr \dif s.
\Ees
If $a(s)=(a_1(s),...,a_d(s))$ is a $d$-dimensional stochastic process adapted to the filtration $\mcl F_s$ such that $\E \int_{0}^{t} |a(s)|^{2} \dif s<\infty$ for all $t>0$, then
\Be  \label{de:DVInt}
D_V F_{t}(B)\ =\ \int_0^t \Ll a(s),  v(s)\Rr \dif s+\int_0^t \Ll D_V a(s), \dif B_s\Rr.
\Ee
\ \ \

The following integration by parts formula, called Bismut's formula, is probably the most important property in Bismut's approach to Malliavin calculus.

\

\noindent{\bf Bismut's formula.} For Malliavin differentiable $F_t(B)$ such that $F_t(B), D_V F_t(B)\in L^2((\Omega,\mcl F,\PP);\R)$, we have
\Be  \label{de:IBP}
\E\left[D_V F_t(B)\right]\ =\ \E\left[F_t(B)\int_0^t \Ll v(s),  \dif B_s\Rr\right].
\Ee

\

Let $\phi: \R^d \rightarrow \R$ be Lipschitz and let $F_t(B)=(F^{1}_t(B),...,F^{d}_t(B))$ be a $d$-dimensional Malliavian differentiable functional. The following chain rule holds:
\
\be{
D_V \phi(F_{t}(B))\ =\ \Ll \nabla  \phi(F_{t}(B)), D_V F_{t}(B)\Rr\ = \ \sum_{i=1}^d \p_i \phi(F_{t}(B)) D_V F^{i}_t(B).
}

Now we come back to SDE \eqref{de:OU}. Fixing $t \ge 0$ and $x \in \R^d$, the solution
$X_t^x$ is a $d$-dimensional functional of Brownian motion $(B_s)_{0 \le s \le t}$.
The following Malliavin derivative of
$X^{x}_t$ along the direction $V$ exists in $L^2((\Omega,\mcl F,\PP);\R^{d})$ and is defined by
\Bes
D_V X^{x}_t(B)\ =\ \lim_{\e \rightarrow 0}\frac{X^{x}_t(B+\e V)-X^{x}_t(B)}{\e}.
\Ees
We drop the $B$ in $D_V X^x_t(B)$ and write $D_V X^x_t=D_V X^x_t(B)$ for simplicity. It satisfies the equation
\
\Bes
\dif D_V X_t^x\ =\ \nabla g(X_t^x) D_V X_t^x \dif t+\sqrt 2 v(t) \dif t, \ \ \ D_V X^{x}_0=0,
\Ees
and the equation has a unique solution:
\Be \label{de:DVXt}
D_V X_t^x\ =\ \sqrt 2 \int_0^t J_{r,t}^x v(r) \dif r,
\Ee
where $J_{r,t}^{x}$ is defined by \eqref{e:JstX}.
Comparing \eqref{de:Jxt} and \eqref{de:DVXt}, if we take
\
\Be \label{de:vs}
v(s)\ =\ \frac 1{\sqrt 2 t} \nabla_u X_s^x, \ \ \ \ \ 0 \le s \le t,
\Ee
$\big($recall \eq{de:Jac1Est>} and $V_{t}=\int_{0}^{t} v(s) \dif s$$\big)$, because $\nabla_u X_r^x=J_{0,r}^x u$ and $J^{x}_{r,t} J^{x}_{0,r}=J^{x}_{0,t}$ for all $0 \le r \le t$,
we have
\ \ \ \
\Be  \label{de:DV=J}
D_V X_t^x\ =\ \nabla_u X_t^x
\Ee
and
\ \ \ \
\Be  \label{de:DV=Js}
D_V X_s^x\ =\ \frac s t\nabla_u X_s^x, \ \ \ 0 \le s \le t.
\Ee

\noindent Let $u_1, u_2 \in\R^d$, and define $v_i$ and $V_i$ as \eqref{de:vs} and \eqref{e:VtDef}, respectively, for $i=1,2$.
We can similarly define $D_{V_2} \nabla_{u_1} X_s^x$, which satisfies the following equation: for $s \in [0,t]$,
\Be \label{003}
\begin{split}
\frac{\dif}{\dif s}D_{V_2} \nabla_{u_1} X_s^x&\ = \ \nabla g(X_s^x) D_{V_2} \nabla_{u_1} X_s^x+\nabla^2 g(X_s^x) D_{V_2} X_s^x \nabla_{u_1} X_s^x \\
&\ = \ \nabla g(X_s^x) D_{V_2} \nabla_{u_1} X_s^x+\frac{s}{t} \nabla^2 g(X_s^x) \nabla_{u_2} X_s^x \nabla_{u_1} X_s^x
\end{split}
\Ee
with $D_{V_2} \nabla_{u_1} X_0^x=0$, where the second equality is by \eqref{de:DV=Js}.
For further use, we define
\
\be{
 \mcl I^{x}_{u_1}(t):\ =\ \frac{1}{\sqrt 2 t}\int_0^t \Ll \nabla_{u_1} X_s^x, \dif B_s\Rr,
}
\be{
 \mcl I^{x}_{{u_1},u_2}(t):\ = \ \mcl I_{u_1}^x(t) \mcl I_{u_2}^x(t)-D_{V_2} \mcl I_{u_1}^x(t).
}

\vskip 3mm

The following upper bounds on Malliavin derivatives are proven in Section 8.
\begin{lem} \label{l:GraDeX}
Let $u_i \in \R^d$ for $i=1,2$, and let
$$V_{i,s}\ = \ \int_0^{s} v_i(r) \dif r \ \ {\rm for} \ \ 0 \le s \le t,$$
where $v_i(r)=\frac{1}{\sqrt 2 t} \nabla_{u_i} X_r^x$ for $0 \le r \le t$. 
We have
\ \ \ \Be \label{e:DVNuXsp>}
\begin{split}
|D_{V_2} \nabla_{u_1} X_s^x|  \ \le \ C_{\theta} |u_1| |u_2|.
\end{split}
\Ee
\end{lem}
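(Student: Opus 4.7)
The plan is to prove \eqref{e:DVNuXsp>} by solving the linear ODE \eqref{003} via variation of constants and then reducing the resulting integral to the one already controlled in the proof of the second estimate \eqref{de:Jac2Est>} in Lemma \ref{l:JacDev}. Since \eqref{003} is linear in $D_{V_2}\nabla_{u_1}X_s^x$ with fundamental solution $J_{r,s}^x$ from \eqref{e:JstX} and zero initial condition, Duhamel's formula yields
\[
D_{V_2}\nabla_{u_1} X_s^x \ = \ \int_0^s J_{r,s}^x \cdot \frac{r}{t}\,\nabla^2 g(X_r^x)\,\nabla_{u_2} X_r^x\,\nabla_{u_1} X_r^x \, dr.
\]
This differs from the analogous representation of $\nabla_{u_2}\nabla_{u_1}X_s^x$ derived from \eqref{e:Du12Xt} solely by the factor $r/t$, which is bounded by one since $0\le r\le s\le t$.

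Taking Euclidean norms inside the integral, using \eqref{a3} to bound $\nabla^2 g(X_r^x)$, applying \eqref{de:Jac1Est>} to each of $\nabla_{u_1}X_r^x$ and $\nabla_{u_2}X_r^x$, and using the operator-norm version of \eqref{de:Jac1Est>} for $J_{r,s}^x$ gives
\[
|D_{V_2}\nabla_{u_1} X_s^x| \ \le \ \theta_3 |u_1||u_2|\int_0^s e^{-\theta_0(s-r)}\,(1+\theta_1|X_r^x|)^{\theta_2-1}\, e^{-2\theta_0 r}\,dr.
\]
The right-hand side is precisely the pathwise bound that arises in the proof of \eqref{de:Jac2Est>}, where it is shown to be controlled uniformly in $x$ by some $C_\theta$; hence $|D_{V_2}\nabla_{u_1}X_s^x|\le C_\theta|u_1||u_2|$ as desired.

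The main obstacle, shared with the proof of Lemma \ref{l:JacDev}, is showing that the integral above is bounded uniformly in $x\in\R^d$ despite the potentially unbounded factor $(1+\theta_1|X_r^x|)^{\theta_2-1}$ when $\theta_2>1$. I plan to handle it by exploiting the enhanced dissipativity of the SDE encoded in \eqref{a2} and \eqref{a1a}, which force $|X_r^x|$ to decay super-polynomially from large initial data. A convenient route is to upgrade \eqref{de:Jac1Est>} to the sharper pathwise bound $|\nabla_{u}X_r^x|\le \exp\bigl\{-\theta_0 r-\theta_0\theta_1\int_0^r|X_\tau^x|^{\theta_2}\,d\tau\bigr\}|u|$ (immediate from an energy estimate on $|\nabla_uX_r^x|^2$ via \eqref{a2}), and likewise for $J_{r,s}^x$; the extra weight $\exp\bigl(-c\int|X_\tau^x|^{\theta_2}\,d\tau\bigr)$ then absorbs $(1+|X_r^x|)^{\theta_2-1}$, after splitting the integration region at a threshold depending on $|x|$ and using the differential inequality $\frac{d}{dr}|X_r^x|^2 \lesssim -|X_r^x|^{2+\theta_2}$ implied by \eqref{a1a} to control the contribution from small $r$ when $|x|$ is large.
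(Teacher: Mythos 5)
Your Duhamel representation is correct, and the first two steps of the plan are sound: representing $D_{V_2}\nabla_{u_1}X_s^x = \int_0^s J_{r,s}^x\,\frac{r}{t}\,\nabla^2 g(X_r^x)\nabla_{u_2}X_r^x\nabla_{u_1}X_r^x\,dr$ and then upgrading \eqref{de:Jac1Est>} to the sharper pathwise bound $|\nabla_u X_r^x|\le \exp\{-\theta_0\int_0^r(1+\theta_1|X_\tau^x|^{\theta_2})\,d\tau\}|u|$ (and similarly for $\|J_{r,s}^x\|_{op}$). This is essentially a reformulation of the paper's energy estimate in Lemma~\ref{l:JacDev}: the Grönwall step there and Duhamel plus norm bounds here carry the same information. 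However, your first displayed inequality with the crude factors $e^{-\theta_0(s-r)}e^{-2\theta_0 r}$ is \emph{not} the bound that appears in the paper's proof of \eqref{de:Jac2Est>} --- that proof retains the full dissipative weight $\exp[-\theta_0\int_r^s(1+\theta_1|X_\tau^x|^{\theta_2})d\tau]$ throughout; if you discard it down to $e^{-\theta_0(s-r)}$ before facing the factor $(1+\theta_1|X_r^x|)^{\theta_2-1}$, you cannot get a uniform bound.

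The genuine gap is in the proposed finish. The pathwise differential inequality $\frac{d}{dr}|X_r^x|^2\lesssim -|X_r^x|^{2+\theta_2}$ is false: $X_r^x$ solves the SDE \eqref{de:OU}, and It\^o's formula gives $d|X_r^x|^2=2\langle X_r^x,g(X_r^x)\rangle\,dr+2d\,dr+2\sqrt2\langle X_r^x,dB_r\rangle$, so the martingale term can push $|X_r^x|$ upward along any given path; \eqref{a1a} controls only the drift, and hence only $\E|X_r^x|^2$ (as in Lemma~\ref{l:SolPro}), not the trajectory. Consequently the plan ``split the integration at a threshold in $|x|$ and use super-polynomial pathwise decay'' cannot be carried out. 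The correct (and much simpler) way to absorb $(1+\theta_1|X_r^x|)^{\theta_2-1}$, which is exactly what the paper does in Lemma~\ref{l:JacDev}, is purely algebraic and needs no decay of $|X_r^x|$ at all: first note $(1+\theta_1 z)^{\theta_2-1}\le C_\theta(1+\theta_1 z^{\theta_2})$ for all $z\ge 0$, so with $a(\tau):=\theta_0(1+\theta_1|X_\tau^x|^{\theta_2})$ your integrand (after using $r/t\le 1$ and dropping the harmless surplus exponentials) is bounded by $C_\theta|u_1||u_2|\,a(r)\,e^{-\int_r^s a(\tau)\,d\tau}$; then the elementary identity $\int_0^s a(r)\,e^{-\int_r^s a(\tau)d\tau}\,dr = 1-e^{-\int_0^s a}\le 1$, valid pathwise for every nonnegative $a$, closes the estimate uniformly in $x$ and $s$.
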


\begin{lem}  \label{l:LambEst}
Let $u_1,u_2 \in \R^d$ and $x \in \R^d$.
For all $p \ge 1$, $x \in \R^d$, we have
\Be \label{de:The1Est}
\E|\mcl I_{u_1}^x(t)|^{p} \ \le \ \frac{C_{\theta,p}  |u_1|^{p}}{t^{p/2}},
\Ee
\Be  \label{de:DThe1Est>}
\E\left|\nabla_{u_2} \mcl I_{u_1}^x(t)\right|^{p} \ \le \ \frac{C_{\theta,p}   |u_1|^{p} |u_2|^{p}}{t^{p/2}},
\Ee
\Be  \label{e:DVLamU>}
\begin{split}
\E |D_{V_2} \mcl I_{u_1}^x(t)|^{p} & \ \le \ \frac{C_{\theta,p}   |u_2|^{p} |{u_1}|^{p}}{t^{p}},
\end{split}
\Ee
\Be  \label{de:The2Est>}
\E|\mcl I_{u_1, u_2}^x(t)|^{p} \ \le \ \frac{C_{\theta,p}  |u_1|^{p}|u_2|^{p}}{t^{p}}.
\Ee
\end{lem}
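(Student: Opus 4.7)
The plan is to establish the four estimates in \eqref{de:The1Est}--\eqref{de:The2Est>} in sequence, in each case reducing the problem to a moment bound on an Itô integral and controlling it by the Burkholder--Davis--Gundy (BDG) inequality together with the a priori Jacobian and Malliavin bounds from Lemmas \ref{l:JacDev} and \ref{l:GraDeX}.

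For \eqref{de:The1Est}, I would apply BDG directly to the stochastic integral in $\mcl I_{u_1}^x(t)$, reducing matters to bounding $\E\bigl(\int_0^t|\nabla_{u_1}X_s^x|^2\,\dif s\bigr)^{p/2}$; the contraction \eqref{de:Jac1Est>} controls the inner integral by $|u_1|^2\min(t,1/(2\theta_0))$, and after dividing by the $(\sqrt 2\,t)^p$ prefactor one recovers the claimed $t^{-p/2}$ rate (the exponential contraction even produces a stronger bound when $t$ is large, but the uniform rate is all that is stated). The bound \eqref{de:DThe1Est>} is analogous: differentiating $\mcl I_{u_1}^x(t)$ in $x$ along $u_2$ commutes with the Itô integral (justified by standard smoothness of the Jacobian flow), so $\nabla_{u_2}\mcl I_{u_1}^x(t)$ becomes the stochastic integral of $\nabla_{u_2}\nabla_{u_1}X_s^x$, and BDG combined with the uniform second-order bound \eqref{de:Jac2Est>} delivers the result.

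The crux of the lemma is \eqref{e:DVLamU>}. The Malliavin product rule \eqref{de:DVInt} applied with $v_2(s)=\frac{1}{\sqrt 2\,t}\nabla_{u_2}X_s^x$ decomposes
\be{
D_{V_2}\mcl I_{u_1}^x(t)=\frac{1}{2t^2}\int_0^t\langle \nabla_{u_1}X_s^x,\nabla_{u_2}X_s^x\rangle\,\dif s+\frac{1}{\sqrt 2\,t}\int_0^t\langle D_{V_2}\nabla_{u_1}X_s^x,\dif B_s\rangle.
}
The deterministic piece is pathwise dominated by $C_\theta|u_1||u_2|/t$ thanks to the Jacobian contraction \eqref{de:Jac1Est>} (the inner integral is trivially bounded by $t$), which already yields the required $t^{-p}$ rate after taking the $p$-th moment. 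The Itô piece is the main obstacle: the uniform estimate from Lemma \ref{l:GraDeX} combined with BDG only produces a $t^{-p/2}$ rate, so the missing $t^{-p/2}$ has to be extracted from the hidden $1/t$ scale of $D_{V_2}\nabla_{u_1}X_s^x$ itself. This scale is visible in the variational equation \eqref{003}, whose forcing $\frac{s}{t}\nabla^2 g(X_s^x)\nabla_{u_2}X_s^x\nabla_{u_1}X_s^x$ carries both the explicit $s/t$ prefactor and the exponential contraction of the Jacobians; a Grönwall argument on \eqref{003} against the one-sided bound \eqref{a2}, with the polynomial growth of $\nabla^2 g$ in \eqref{a3} controlled by the moment estimate of $X_s^x$ from Lemma \ref{l:SolPro}, upgrades the uniform bound on $|D_{V_2}\nabla_{u_1}X_s^x|$ to one carrying the needed $1/t$ factor, and plugging this refinement into BDG closes the gap.

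Finally, \eqref{de:The2Est>} is an immediate consequence of the definition $\mcl I_{u_1,u_2}^x(t)=\mcl I_{u_1}^x(t)\mcl I_{u_2}^x(t)-D_{V_2}\mcl I_{u_1}^x(t)$: the triangle inequality in $L^p$ reduces matters to bounding the product and the Malliavin derivative separately, Cauchy--Schwarz together with \eqref{de:The1Est} applied at order $2p$ controls the product term, and the bound \eqref{e:DVLamU>} just proved handles the rest.
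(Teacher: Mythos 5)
Your decompositions and choice of tools mirror the paper's: BDG plus the Jacobian bounds \eqref{de:Jac1Est>}--\eqref{de:Jac2Est>} for \eqref{de:The1Est} and \eqref{de:DThe1Est>}, the Malliavin product rule \eqref{de:DVInt} with the $t^{-p}$ deterministic piece for \eqref{e:DVLamU>}, and triangle inequality plus Cauchy--Schwarz at order $2p$ for \eqref{de:The2Est>}.

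You are right to flag the It\^o piece of \eqref{e:DVLamU>} as the crux. The uniform bound $|D_{V_2}\nabla_{u_1}X_s^x|\le C_\theta|u_1||u_2|$ of Lemma~\ref{l:GraDeX}, fed into BDG, only yields $t^{-p/2}$ for that term; in fact, the paper's own displayed proof stops at exactly this $t^{-p/2}$ estimate and then asserts \eqref{e:DVLamU>} by ``combining'', which does not give $t^{-p}$ for $t\ge1$. (The discrepancy is harmless downstream, since \eqref{de:The2Est>} is only used at $p=1$ and integrated against $e^{-t}$, where the extra $t^{-1/2}$ tail contributes a bounded constant, but the lemma as stated is stronger than what the paper's argument establishes.) Your idea of upgrading the Lemma~\ref{l:GraDeX} bound to carry a $1/t$ factor, using the explicit $s/t$ prefactor in \eqref{003} together with the contraction $|\nabla_{u_i}X_s^x|\le e^{-\theta_0 s}|u_i|$, is precisely the missing ingredient and does close the gap.

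Where your sketch goes wrong is in the mechanism of the Gr\"onwall step: you say the polynomial growth factor from \eqref{a3} is ``controlled by the moment estimate of $X_s^x$ from Lemma~\ref{l:SolPro}''. That cannot work as described: the Gr\"onwall estimate has to be pathwise (as in the proof of Lemma~\ref{l:JacDev}), and Lemma~\ref{l:SolPro} only controls second moments while $\theta_2$ may exceed $2$. The correct device is that $(1+\theta_1 r)^{\theta_2-1}\le C_\theta(1+\theta_1 r^{\theta_2})$, so the growth factor in the forcing is the same factor appearing in the dissipation coming from \eqref{a2}; they cancel inside the integrating factor, $\int_0^s e^{-\int_r^s a(\ell)\,\dif\ell}\,a(r)\,\dif r\le1$ holds pathwise, and the surviving prefactor $\sup_{r\ge0}r^2 e^{-4\theta_0 r}/t^2$ supplies the desired $1/t^2$. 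No moment of $X_s^x$ enters anywhere.
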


\section{The representations of $f, \nabla f$ and $\nabla^2 f$} \label{s:Rep}
It is well known that SDE \eqref{de:OU} has the following infinitesimal generator $\mcl A$ \cite[Chapter VII]{ReYo99} defined by
\
\be{  
\mcl A f(x)\ = \ \Delta f+\Ll g(x), \nabla f(x)\Rr, \ \ \ \  \ \ f \in \mcl D(\mcl A),
}
where $\mcl D(\mcl A)$ is the domain of $\mcl A$, whose exact definition depends on the underlying function space that we consider.
$\mcl A$ generates a Markov semigroup $(P_t)_{t \ge 0}$ defined by
$$P_t f(x)\ = \ \E[f(X_t^x)], \ \ \ \ f \in \mcl C_b(\R^d,\R).$$
Note that $P_t:\mcl C_b(\R^d,\R)\rightarrow \mcl C_b(\R^d,\R)$ is a linear operator.
It is well known that $P_t$ can be extended to an operator $P_t: L^p_\mu(\R^d, \R) \rightarrow L^p_\mu(\R^d,\R)$ with $p \ge 1$, where $L^p_\mu(\R^d,\R)$ is the collection of all measurable functions $f:\R^d \rightarrow \R$ such that $\int_{\R^d} |f(x)|^p \mu(\dif x)<\infty$. Moreover, we have
\ \ \
\Be  \label{e:PtfRep}
P_t f(x)\ = \ \E[f(X_t^x)], \ \  \ \ \ \ \ \  f  \in L^p_\mu(\R^d,\R).
\Ee
\vskip 3mm

The Stein equation \eqref{de:StEq} can be written as
\Be  \label{e:PoiEqn}
\mcl A f(x)\ = \ h(x)-\mu(h),
\Ee
which is called the Poisson equation associated with $\mcl A$.
The solution is given as follows.
\ \ \ \
\begin{proposition}  \label{p:fRep}
For any Lipschitz function $h: \R^d \rightarrow\R$ with $||\nabla h||<\infty$, we have the following two statements:

\noindent (1). A solution to \eqref{de:StEq} is given by
\Be  \label{e:SolRep}
f(x)\ = \ -\int_0^\infty \E[h(X^x_t)-\mu(h)]\dif t.
\Ee
Moreover, we have
\Be \label{e:fBound}
|f(x)| \ \le \ C_{\theta_0, \theta_1, \theta_2, d}(1+|x|)||\nabla h||.
\Ee
(2). We have
\
\Be \label{de:ResRep}
f(x)\ = \ \int_0^\infty e^{-t} \E[f(X^x_t)+\mu(h)-h(X^x_t)] \dif t.
\Ee
\end{proposition}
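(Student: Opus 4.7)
The plan is to prove both parts using the semigroup $(P_t)_{t\ge 0}$ with generator $\mcl A$ and basic properties of the Langevin SDE.

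For part (1), the candidate $f(x)=-\int_0^\infty\E[h(X^x_t)-\mu(h)]\dif t$ is a standard semigroup construction for Poisson equations; the first task is to verify convergence and derive the linear growth bound. The key ingredient is exponential contraction of the semigroup on Lipschitz functions: by \eqref{de:Jac1Est>} applied along the segment from $x$ to $y$, one obtains the deterministic bound $|X_t^x-X_t^y|\le e^{-\theta_0 t}|x-y|$, hence $|P_th(x)-P_th(y)|\le e^{-\theta_0 t}\|\nabla h\||x-y|$. Integrating against $\mu$ in $y$, using invariance $\mu(P_t h)=\mu(h)$ and the second-moment bound \eqref{e:2ndMom}, yields
\[
|P_th(x)-\mu(h)|\ \le\ e^{-\theta_0 t}\|\nabla h\|\Bigl(|x|+C_{\theta_0,\theta_1,\theta_2,d}\Bigr),
\]
which when integrated in $t$ gives both absolute convergence of the defining integral and the bound \eqref{e:fBound}. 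To verify that $f$ solves \eqref{de:StEq}, I would apply $\mcl A$ under the integral sign and use $\mcl A P_t h=\frac{\dif}{\dif t}P_t h$, so that the integrand telescopes to $h(x)-\lim_{t\to\infty}P_t h(x)=h(x)-\mu(h)$ (ergodicity in the sense of Remark \ref{r1} gives the limit, and the uniform exponential decay above justifies interchanging $\mcl A$ with the improper integral).

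For part (2), I would recognize this as the resolvent identity $f=(I-\mcl A)^{-1}(f-\mcl A f)$ with $\mcl A f=h-\mu(h)$, and prove it by direct computation. Writing
\[
\int_0^\infty e^{-t}\E[f(X^x_t)+\mu(h)-h(X^x_t)]\dif t \ =\ \int_0^\infty e^{-t}\bigl(P_t f(x)-(P_t h(x)-\mu(h))\bigr)\dif t,
\]
and using $P_th-\mu(h)=\mcl A P_tf=\frac{\dif}{\dif t}P_tf$ (since $P_t$ commutes with $\mcl A$ and $\mcl A f=h-\mu(h)$), integration by parts gives
\[
\int_0^\infty e^{-t}\tfrac{\dif}{\dif t}P_tf(x)\dif t\ =\ -f(x)+\int_0^\infty e^{-t}P_tf(x)\dif t,
\]
provided the boundary term $e^{-t}P_t f(x)$ vanishes as $t\to\infty$; this follows from the linear growth bound \eqref{e:fBound} combined with the uniform-in-$t$ estimate $\E|X^x_t|\le C_{\theta,d}(1+|x|)$ from Lemma \ref{l:SolPro}. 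Cancelling the $\int_0^\infty e^{-t}P_tf(x)\dif t$ terms yields exactly \eqref{de:ResRep}.

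The main obstacle I expect is the rigorous justification of the semigroup manipulations, since $f$ is only known to be Lipschitz a priori and we need it to lie in a suitable domain of $\mcl A$ so that $\mcl A P_t h=\frac{\dif}{\dif t}P_t h$ holds pointwise and interchanges with the improper integrals are legal. This can be handled by a standard approximation: first verify \eqref{e:SolRep} and \eqref{de:ResRep} for Lipschitz $h\in\mcl C^\infty_0(\R^d,\R)$ where all derivatives and interchanges are classical, and then pass to general Lipschitz $h$ by mollification, using the uniform bounds in terms of $\|\nabla h\|$ established above together with dominated convergence. The ergodic limit $P_th(x)\to\mu(h)$, sketched in Remark \ref{r1} and detailed in Appendix A, supplies the remaining ingredient.
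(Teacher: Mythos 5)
Your proof is correct, and it takes a genuinely different route from the paper in two respects. For the exponential-decay estimate that makes \eqref{e:SolRep} converge, the paper invokes the reflection-coupling ergodicity bound \eqref{e:dWErg} (Eberle's theorem, Appendix A), whereas you derive the cleaner pointwise contraction $|X^x_t - X^y_t| \le e^{-\theta_0 t}|x-y|$ directly from the deterministic flow estimate underlying \eqref{de:Jac1Est>} (equivalently from \eqref{a2} applied to $\frac{\dif}{\dif t}|X^x_t - X^y_t|^2$). This is more elementary, avoids the machinery of \cite{Eb16} altogether, and even yields a slightly better constant (rate $\theta_0$ rather than the coupling rate $c$, and no factor of $2$). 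For the verification that $f$ solves the Poisson equation, the paper works with the resolvent $\e(\e-\mcl A)^{-1}\hat h$ and lets $\e\to 0^+$, exploiting closedness of $\mcl A$ to \emph{conclude} that $\int_0^\infty P_t\hat h\,\dif t$ lies in $\mcl D(\mcl A)$ without assuming it; you instead apply $\mcl A$ under the integral, use $\mcl A P_t h=\frac{\dif}{\dif t}P_t h$, and telescope. Your route is shorter and more transparent, but, as you correctly anticipate, it requires a priori domain membership, which you repair by the smooth-$h$ mollification argument (the same approximation scheme the paper itself deploys later in Section~7). For part (2) the two proofs are essentially the same identity: the paper quotes the integral representation of $(1-\mcl A)^{-1}$, while you unpack it by an explicit integration by parts and a boundary-term check via \eqref{e:fBound} and Lemma~\ref{l:SolPro}; this is a correct and more self-contained rendering of the same step. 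Net effect: your argument is more elementary and self-contained on the analytic side (no appeal to Eberle), at the cost of having to handle the domain-of-$\mcl A$ issue by approximation, which the paper's resolvent/closedness argument sidesteps.
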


\begin{rem}
The representation of $f$ in \eqref{de:ResRep} plays a crucial role in estimating $\nabla_{u_1} \nabla_{u_2} f$. We roughly explain it as follows. By a similar argument to that used to prove \eqref{e:g''4} below, we can show formally that
\ \ \ \
\Bes
\begin{split}
\nabla_{u_1}\nabla_{u_2} f(x)\ = \ \int_0^\infty \E\left[\nabla_{u_1} h(X_t^x) \mcl I_{u_2}^x(t)\right] \dif t.
\end{split}
\Ees
However, it is not known whether this integral is well defined.
Instead, we borrow the idea from
\cite[Section 4]{DaGo01} to introduce a new term $e^{-  t}$, and the corresponding new representation \eq{e:g''4} will produce an integrability.
\end{rem}

\begin{proof}
(1). Recall \eqref{e:PtfRep} and denote $\hat h=\mu(h)-h$. Let us first show that $\int_0^\infty P_s \hat h(x) \dif s$ is well defined. By \eqref{e:dWErg} in Appendix A,
we have
\Bes
\begin{split}
 \sup_{\|\nabla h\| \le 1} |P_t h(x)-\mu(h)|\ = \ \sup_{\|\nabla h\| \le 1} |\E[h(X_t^x)]-\mu(h)| \le 2 e^{-ct} \sup_{\|\nabla h\| \le 1} |h(x)-\mu(h)|,
\end{split}
\Ees
where $c$ depends on $\theta_0, \theta_1, \theta_2$.
Because
$$\sup_{\|\nabla h\| \le 1} |h(x)-\mu(h)| \ \le \ \int_{\R^d} |y-x| \mu(\dif y) \ \le \ (m_1(\mu)+|x|),$$
where $m_1(\mu)$ denotes the first absolute moment of $\mu$, we have
\ \ \
\Be  \label{e:ErbEst}
\begin{split}
|P_t \hat h(x)|\ = \ |P_t h(x)-\mu(h)| \ \le \ 2e^{-ct}  (m_1(\mu)+|x|) \|\nabla h\|, \ \ \ \ \ \forall  \ t>0.
\end{split}
\Ee
By \eqref{e:2ndMom}, $m_{1}(\mu)$ is finite; hence, $\left|\int_0^\infty P_t \hat h(x) \dif t\right|<\infty$.

For any $\e>0$, it is well known that $\e-\mcl A$ is invertible, and
 \ \ \
 \Bes
 (\e-\mcl A)^{-1} \hat h\ = \ \int_0^\infty e^{-\e t} P_t \hat h \ \dif t;
 \Ees
that is,
 \ \ \
 \Bes
 \e \int_0^\infty e^{-\e t} P_t \hat h \ \dif t-\hat h\ = \ \mcl A\left(\int_0^\infty e^{-\e t} P_t \hat h \ \dif t\right).
 \Ees
 As $\e \rightarrow 0+$,
$$\e \int_0^\infty e^{-\e t} P_t \hat h \ \dif t-\hat h \longrightarrow -\hat h,\ \ \ \ \ \ \int_0^\infty e^{-\e t} P_t \hat h \ \dif t \longrightarrow \int_0^\infty P_t \hat h \ \dif t.$$
As $\mcl A$ is
a closed operator, $\int_0^\infty P_t \hat h \ \dif t$ is in the domain of $\mcl A$ and
\Bes
-\hat h(x)\ = \ \mcl A \left(\int_0^\infty P_s \hat h(x) \dif s\right).
\Ees
Therefore, \eq{e:SolRep} is a solution to Eq. \eqref{de:StEq}.

By \eqref{e:ErbEst},
\
\Bes
\begin{split}
|f(x)| \ \le \ \left|\int_0^\infty P_t \hat h(x) \dif t\right| \ \le \ C_{\theta_0,\theta_1,\theta_2, d} (1+|x|)||\nabla h||.
\end{split}
\Ees
Hence, \eqref{e:fBound} is proven.

Now we prove (2). Note that
\
\Bes
(1-\mcl A) f(x)\ = \ f(x)+\hat h(x).
\Ees
By the integral representation of $(1-\mcl A)^{-1}$, we have
\
\Bes
f(x)\ = \ (1-\mcl A)^{-1}[ f+\hat h](x)\ = \ \int_0^\infty e^{-t} P_t (f+\hat h)(x) \dif t,
\Ees
which is \eqref{de:ResRep}.
\end{proof}



\begin{lem}  \label{l:BEL}
Let $\phi \in \mcl C^{1}(\R^{d},\R)$ be such that $\|\nabla \phi\| < \infty$, and let
$u, u_1, u_2 \in \R^d$. For every $t>0$ and $x \in \R^{d}$, we have
\ \ \
\Be  \label{e:GraPtPhi}
\left|\nabla_u \E\left[\phi(X^{x}_{t})\right]\right| \ \le \ \|\nabla \phi\| |u|,
\Ee
and
\
\Be \label{e:BELFor}
\nabla_u  \E\left[\phi(X^{x}_{t})\right]\ =\ \E\left[\phi(X_t^x)  \mcl I_u^x(t)\right].
\Ee
If, in addition, $\phi \in \mcl C^{2}(\R^{d},\R)$, then we have
\Be  \label{e:2ndDerEqn-1}
\nabla_{u_2} \nabla_{u_1} \E\left[\phi(X^{x}_{t})\right]\ = \ \E\left[\nabla_{u_1} \phi(X_t^x) \mcl I_{u_2}^x(t)\right],
\Ee
\Be  \label{e:2ndDerEqn-3}
\E[\nabla_{u_2} \phi(X_t^x) \mcl I_{u_1}^x(t)]\ =\ \E\left[\phi(X_t^x) \mcl I_{u_2,u_1}^x(t)\right].
\Ee
\end{lem}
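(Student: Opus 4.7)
The plan is to establish the four identities in order, each building on the previous, by combining differentiation under the expectation, the chain rule along the Jacobi flow, and Bismut's integration-by-parts formula \eqref{de:IBP}.

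For \eqref{e:GraPtPhi}, the uniform Jacobi bound \eqref{de:Jac1Est>} and dominated convergence permit one to differentiate under the expectation, yielding $\nabla_u\E[\phi(X^x_t)]=\E[\Ll\nabla\phi(X^x_t),\nabla_u X^x_t\Rr]$. Cauchy--Schwarz together with $|\nabla_u X^x_t|\le e^{-\theta_0 t}|u|$ then gives the stated bound. For \eqref{e:BELFor}, I would take $v(s)=\frac{1}{\sqrt{2}\,t}\nabla_u X^x_s$ so that, by \eqref{de:DV=J}, the associated Malliavin perturbation $V_t=\int_0^t v(s)\dif s$ satisfies $D_V X^x_t=\nabla_u X^x_t$. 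The chain rule gives $D_V\phi(X^x_t)=\Ll\nabla\phi(X^x_t),\nabla_u X^x_t\Rr$, which by the previous step equals the integrand of $\nabla_u\E[\phi(X^x_t)]$, and Bismut's formula delivers $\nabla_u\E[\phi(X^x_t)]=\E[D_V\phi(X^x_t)]=\E[\phi(X^x_t)\mcl I^x_u(t)]$.

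For \eqref{e:2ndDerEqn-3}, the natural move is to apply the Malliavin product rule \eqref{de:ProRul} to the random variable $\phi(X^x_t)\mcl I^x_{u_1}(t)$ in direction $V_2$, producing $D_{V_2}[\phi(X^x_t)\mcl I^x_{u_1}(t)]=D_{V_2}\phi(X^x_t)\,\mcl I^x_{u_1}(t)+\phi(X^x_t)\,D_{V_2}\mcl I^x_{u_1}(t)$. Taking expectations and invoking Bismut's formula \eqref{de:IBP} on the left-hand side yields $\E[\phi(X^x_t)\mcl I^x_{u_1}(t)\mcl I^x_{u_2}(t)]$. Since $D_{V_2}\phi(X^x_t)=\Ll\nabla\phi(X^x_t),\nabla_{u_2}X^x_t\Rr$ coincides with $\nabla_{u_2}\phi(X^x_t)$ in the chain-rule sense the paper uses when $\phi$ is composed with $X^x_\cdot$, rearrangement together with the definition of $\mcl I^x_{u_2,u_1}$ gives the identity.

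For \eqref{e:2ndDerEqn-1}, which I expect to be the most delicate step, the idea is to differentiate the already established first-order identity \eqref{e:BELFor} once more. Interchanging $\nabla_{u_2}$ with the expectation and using the product rule in $x$ produces the sum $\E[\nabla_{u_2}\phi(X^x_t)\mcl I^x_{u_1}(t)]+\E[\phi(X^x_t)\nabla_{u_2}\mcl I^x_{u_1}(t)]$, whose first piece is resolved by \eqref{e:2ndDerEqn-3}. A parallel manipulation with the roles of $u_1$ and $u_2$ swapped, together with the symmetry of mixed partials, is designed to collapse the pair of resulting corrections into the single-term right-hand side of \eqref{e:2ndDerEqn-1}. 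The main obstacles are justifying the interchange of $\nabla_{u_2}$ and $\E$, which I would handle via the $L^p$ bounds of Lemma \ref{l:LambEst} on $\mcl I^x_u(t)$, $\nabla_u\mcl I^x_\cdot(t)$ and $D_V\mcl I^x_\cdot(t)$, and identifying the exact form of $\nabla_{u_2}\mcl I^x_{u_1}(t)$ and $D_{V_2}\mcl I^x_{u_1}(t)$ via the ODEs \eqref{e:Du12Xt} and \eqref{003}, so that the remaining correction cancels as required.
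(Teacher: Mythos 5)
Your treatment of \eqref{e:GraPtPhi}, \eqref{e:BELFor}, and \eqref{e:2ndDerEqn-3} is correct and follows essentially the same route the paper takes: differentiate under the expectation along the Jacobi flow for the first, realize $\nabla_u X_t^x = D_V X_t^x$ for the special choice $v(s)=\frac{1}{\sqrt 2 t}\nabla_u X_s^x$ and apply Bismut's formula for the second, and apply the Malliavin product rule to $\phi(X_t^x)\mcl I_{u_1}^x(t)$ followed by Bismut's formula for the fourth. Your observation that $D_{V_2}\phi(X_t^x)=\Ll\nabla\phi(X_t^x),\nabla_{u_2}X_t^x\Rr$ is exactly the chain-rule identification the paper uses there.

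Where your proposal goes astray is \eqref{e:2ndDerEqn-1}, which you flagged as the delicate step and ultimately left as a sketch. Your plan — differentiate \eqref{e:BELFor} once more in $x$, invoke the product rule to get $\E[\nabla_{u_2}\phi(X_t^x)\mcl I^x_{u_1}(t)]+\E[\phi(X_t^x)\nabla_{u_2}\mcl I^x_{u_1}(t)]$, resolve the first term by \eqref{e:2ndDerEqn-3}, and hope that a parallel computation with $u_1,u_2$ swapped plus symmetry collapses the remaining corrections — is both more complicated than necessary and is not actually carried through; it is not clear that the extra term $\E[\phi(X_t^x)\nabla_{u_2}\mcl I^x_{u_1}(t)]$ cancels against anything. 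The paper's argument for \eqref{e:2ndDerEqn-1} is a one-liner: write $\nabla_{u_2}\nabla_{u_1}\E[\phi(X_t^x)]=\nabla_{u_2}\E[\nabla_{u_1}\phi(X_t^x)]$ and then apply the already-proved \eqref{e:BELFor} \emph{with $\phi$ replaced by the $\mcl C^1$ function $\nabla_{u_1}\phi$} (this is where $\phi\in\mcl C^2$ is used), which immediately yields $\E[\nabla_{u_1}\phi(X_t^x)\mcl I^x_{u_2}(t)]$. No product rule, no symmetry argument, no extra terms. In short, \eqref{e:2ndDerEqn-1} is not a new identity to be fought for; it is the same Bismut--Elworthy--Li formula \eqref{e:BELFor} re-applied to $\nabla_{u_1}\phi$, and recognizing this is the step your proposal misses.
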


\begin{proof}
 By Lebesgue's dominated convergence theorem and \eqref{de:Jac1Est>},
\ \ \
\be{
\begin{split}
\left|\nabla_u \E\left[\phi(X^{x}_{t})\right]\right|&\ =\ \left|\E[\nabla \phi(X_t^x)\nabla_u X_t^x]\right| \ \le \ \E[|\nabla \phi(X_t^x)| |\nabla_u X_t^x|]  \ \le \ \|\nabla \phi\| |u|.
\end{split}
}
 By \eqref{de:vs} and \eqref{de:DV=J}, we have
\be{
\begin{split}
\nabla_{u} \E\left[\phi(X^{x}_{t})\right]
&\ =\ \E\left[\nabla \phi(X_t^x) \nabla_{u} X_t^x \right] \\
&\ =\ \E\left[\nabla \phi(X_t^x) D_{V} X_t^x\right] \\
&\ =\ \E\left[D_{V} \phi(X_t^x)\right] \\
&\ =\ \E\left[\phi(X_t^x) \mcl I_{u}^x(t)\right],
\end{split}
}
where the last equality is by Bismut's formula \eqref{de:IBP}.
This proves \eq{e:BELFor}. From \eq{e:BELFor},
we have
\
\Bes
\begin{split}
\nabla_{u_2} \nabla_{u_1}  \E\left[\phi(X^{x}_{t})\right]\ =\ \nabla_{u_2} \E\left[\nabla_{u_1} \phi(X_t^x) \right]\ =\ \E\left[\nabla_{u_1} \phi(X_t^x) \mcl I_{u_2}^x(t)\right],
\end{split}
\Ees
where we used the assumption $\phi \in \mcl C^{2}(\R^{d},\R)$.
This proves \eq{e:2ndDerEqn-1}.
From \eqref{de:vs}, \eqref{de:DV=J}, \eqref{de:ProRul}, \eqref{de:IBP} and a similar calculation, we have
\be{
\begin{split}
&\ \ \ \ \ \ \E\left[\nabla_{u_2} \phi(X_t^x) \mcl I_{u_1}^x(t)\right] \\
&=\ \E\left[\nabla \phi(X_t^x) \nabla_{u_2} X_t^x \mcl I_{u_1}^x(t)\right]\\
&=\ \E\left[\nabla \phi(X_t^x) D_{V_2} X_t^x \mcl I_{u_1}^x(t)\right] \\
&=\ \E\left[D_{V_2} \phi(X_t^x) \mcl I_{u_1}^x(t)\right] \\
&=\ \E\left\{D_{V_2}\left[\phi(X_t^x) \mcl I_{u_1}^x(t)\right]\right\}-\E\left\{\phi(X_t^x)D_{V_2} \mcl I_{u_1}^x(t)\right\} \\
&=\ \E\left\{\left[\phi(X_t^x) \mcl I_{u_1}^x(t)\mcl I_{u_2}^x(t) \right]\right\}-\E\left\{\phi(X_t^x)D_{V_2} \mcl I_{u_1}^x(t)\right\}.
\end{split}
}
Thus, \eqref{e:2ndDerEqn-3} is proven.
\end{proof}
\begin{rem}
Write $P_{t} \phi(x)=\E[\phi(X^{x}_{t})]$, we can see that \eqref{e:BELFor} is the well known Bismut-Elworthy-Li formula \cite[(16)]{ElLi94}. The original proof of this formula is by It\^{o}'s formula and isometry \cite[p.254]{ElLi94}, while our approach is by \eqref{de:DV=J} and Bismut's integration by parts formula \eqref{de:IBP}. The idea in our proof has appeared in \cite{No86}, and been applied to study other problems such as the derivative formula of stochastic systems \cite{HaMa06,WXZ15}. Using Bismut's formula  two times, we obtain \eqref{e:2ndDerEqn-3}, which is crucial in proving \eqref{e:D2FEst2>}.
 Although \cite[(14)]{ElLi94} also gives a second order Bismut-Elworthy-Li formula, it is not directly applicable in our analysis because the first term on the right-hand side of (14) is not integrable at $0$.
\end{rem}

\ \ \ \
\begin{lem} \label{l:Nabf}
Let $h \in \mcl C^{1}(\R^{d},\R)$ be such that $\|\nabla h\| < \infty$. For any $u, u_1, u_2 \in \R^d$ and $x \in \R^d$, we have
\Be  \label{e:g'0}
\begin{split}
\nabla_u f(x)\ = \ \int_0^\infty \E\left[\nabla h(X_t^x) \nabla_u X_t^x\right] \dif t,
\end{split}
\Ee
\Be  \label{e:g'1}
\begin{split}
\nabla_u f(x)\ = \ \int_0^\infty e^{-t} \E\left\{\big[ f(X_t^x)-h(X_t^x)+\mu(h)\big]   \mcl I_u^x(t)\right\} \dif t.
\end{split}
\Ee
If in addition $h \in \mcl C^{2}(\R^{d},\R)$, then
\Be \label{e:g''4}
\begin{split}
\nabla_{u_2} \nabla_{u_1} f(x)&\ = \
\int_0^{\infty} e^{- t} \E\left\{\big[\nabla_{u_1} f(X_t^x)-\nabla_{u_1} h(X_t^x)\big] \mcl I_{u_2}^x(t)\right\} \dif t.
\end{split}
\Ee
\end{lem}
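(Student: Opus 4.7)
The plan is to derive the three identities sequentially, each by differentiating an existing representation of $f$ under the integral sign and then converting the resulting gradient $\nabla_u \E[\cdot]$ into a workable form via the tools from Section 5.

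For \eqref{e:g'0}, I would differentiate the representation \eqref{e:SolRep} under the integral sign. Since $\nabla_u X_t^x = J_{0,t}^x u$ is the Jacobi flow, the ordinary chain rule gives $\nabla_u \E[h(X_t^x)] = \E[\nabla h(X_t^x)\,\nabla_u X_t^x]$, and the deterministic bound \eqref{de:Jac1Est>} yields the integrable dominating function $\|\nabla h\|\, e^{-\theta_0 t}|u|$, which by dominated convergence legitimizes pulling $\nabla_u$ inside $\int_0^\infty dt$.

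For \eqref{e:g'1}, I would start instead from the resolvent representation \eqref{de:ResRep}. Setting $\phi := f + \mu(h) - h$, which is Lipschitz with linear growth by \eqref{e:fBound}, I would apply Bismut's formula \eqref{e:BELFor} for each $t$:
$$\nabla_u\E[\phi(X_t^x)] = \E[\phi(X_t^x)\,\mathcal{I}_u^x(t)].$$
To interchange $\nabla_u$ with $\int_0^\infty e^{-t}\,dt$, I would use Cauchy--Schwarz together with \eqref{de:The1Est} (giving $\E|\mathcal{I}_u^x(t)|^2 \le C|u|^2/t$) and the linear growth of $\phi$ combined with \eqref{e:2ndMom} (giving $\E[\phi(X_t^x)^2] \le C(1+|x|)^2$); the resulting bound $C e^{-t}(1+|x|)|u|/\sqrt{t}$ is integrable at both $0$ and $\infty$. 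One technical caveat is that \eqref{e:BELFor} as stated requires $\phi\in\mathcal{C}^1$ with bounded gradient, whereas here $\phi$ only has linear growth; this is handled by a standard truncation-and-mollification approximation $\phi_n\to\phi$ with $\|\nabla\phi_n\|<\infty$, followed by passage to the limit using the moment bounds just mentioned.

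For \eqref{e:g''4}, I would take $\nabla_{u_2}$ of both sides of \eqref{e:g'1} under the integral sign. For each fixed $t$, using \eqref{e:BELFor} I can rewrite $\E[\phi(X_t^x)\mathcal{I}_{u_1}^x(t)] = \nabla_{u_1}\E[\phi(X_t^x)]$; then applying $\nabla_{u_2}$ and invoking \eqref{e:2ndDerEqn-1} gives
$$\nabla_{u_2}\E[\phi(X_t^x)\mathcal{I}_{u_1}^x(t)] \;=\; \nabla_{u_2}\nabla_{u_1}\E[\phi(X_t^x)] \;=\; \E[\nabla_{u_1}\phi(X_t^x)\,\mathcal{I}_{u_2}^x(t)].$$
Since $\nabla_{u_1}\phi = \nabla_{u_1} f - \nabla_{u_1} h$, this is exactly the integrand in \eqref{e:g''4}. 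The interchange of $\nabla_{u_2}$ with $\int_0^\infty e^{-t}dt$ is again controlled by \eqref{de:The1Est} together with $\|\nabla h\|<\infty$ and the boundedness of $\nabla f$ obtained from \eqref{e:g'1}.

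The main obstacle is the regularity mismatch underlying the second and third steps: both \eqref{e:BELFor} and \eqref{e:2ndDerEqn-1} are stated for smooth test functions with bounded derivatives, while the natural substitute $\phi = f + \mu(h) - h$ has only linear growth (for \eqref{e:g'1}) and, for \eqref{e:g''4}, requires $\phi\in\mathcal{C}^2$ even though the $\mathcal{C}^2$-regularity of $f$ has not yet been established. I would close this gap either via elliptic regularity for the Stein PDE \eqref{e:PoiEqn} or by approximating $\phi$ by a sequence of mollified functions $\phi_n\in\mathcal{C}^2_0(\R^d,\R)$ and passing to the limit in each side of the identity, with the moment estimates \eqref{de:The1Est}, \eqref{de:DThe1Est>}, \eqref{e:DVLamU>}, \eqref{de:The2Est>} of Lemma \ref{l:LambEst} providing uniform integrability. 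Once this justification is in place, the three representations follow from essentially routine dominated-convergence arguments.
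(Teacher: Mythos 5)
Your proof is essentially correct and follows the same route as the paper: differentiate \eqref{e:SolRep} directly for \eqref{e:g'0}, differentiate \eqref{de:ResRep} and apply Bismut--Elworthy--Li for \eqref{e:g'1}, and iterate once more for \eqref{e:g''4}. One point deserves correction, though: the ``technical caveat'' you raise about $\phi := f + \mu(h) - h$ having only linear growth and therefore requiring a truncation-and-mollification argument is a misreading of the hypothesis of \eqref{e:BELFor}. That lemma requires $\phi\in\mathcal{C}^1$ with \emph{bounded gradient}, not bounded $\phi$. Linear growth of $\phi$ is perfectly compatible with $\|\nabla\phi\|<\infty$; indeed, from the already-established \eqref{e:g'0} one gets $\|\nabla f\|\le C_\theta\|\nabla h\|$ (the bound \eqref{e:3.4-early} in the paper), hence $\|\nabla\phi\|\le\|\nabla f\|+\|\nabla h\|<\infty$, so \eqref{e:BELFor} applies directly with no approximation needed. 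This is exactly how the paper proceeds, using \eqref{e:GraPtPhi} to supply the dominating function $\|\nabla\phi\|\,|u|\,e^{-t}$ for the interchange of $\nabla_u$ and $\int_0^\infty e^{-t}\,dt$ --- a slightly more economical route than your Cauchy--Schwarz bound $Ce^{-t}(1+|x|)|u|/\sqrt t$, though both are valid. Similarly for \eqref{e:g''4}: when $h\in\mathcal{C}^2$, the representation \eqref{e:g'0} shows $f\in\mathcal{C}^2$ with $\nabla f$ Lipschitz, so \eqref{e:2ndDerEqn-1} (equivalently, applying \eqref{e:BELFor} to $\phi=\nabla_{u_1}f$ and $\phi=\nabla_{u_1}h$) applies without appealing to elliptic regularity or a second mollification. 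The mollification \emph{is} used in the paper, but only later, in the proof of Theorem~\ref{t:D2FEst}, to pass from $h\in\mathcal{C}^2$ to general Lipschitz $h$; within Lemma~\ref{l:Nabf} itself it is not needed.
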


\begin{proof}
By \eqref{de:Jac1Est>}, we have
\ \ \ \
\be{  
\begin{split}
\left|\nabla_u \E\left[h(X_t^x)-\mu(h)\right]\right| & \ \le \  \E |\nabla h(X_t^x)||\nabla_u X_t^x| \\
&  \ \le \ \|\nabla h\| \E |\nabla_u X_t^x| \ \le \   \|\nabla h\| |u| e^{-\theta_0 t}, \ \ \ \ \ t>0.
\end{split}
}
Therefore, by the dominated convergence theorem, we have
\ \ \ \
\be{ 
\begin{split}
\nabla_u f(x)&\ = \ \int_0^\infty \nabla_u \E[h(X_t^x)-\mu(h)] \dif t \\
&\ = \ \int_0^\infty \E\left[\nabla h(X_t^x) \nabla_u X_t^x\right] \dif t.
\end{split}
}
The previous two relations also imply
\ \ \
\Be  \label{e:3.4-early}
\|\nabla_u f\|\ \ \le \ C_{\theta} \|\nabla h\| |u|.
\Ee
By \eqref{e:GraPtPhi} and \eqref{e:3.4-early}, we have
$$\|\nabla_u P_t [ f+\mu(h)-h]\| \ \le\  (\|\nabla f\|+\|\nabla h\|) |u|\ \le \ C_\theta \|\nabla h\| |u|.$$
By \eqref{de:ResRep}, the dominated convergence theorem and \eqref{e:BELFor}, we have
\
\be{  
\begin{split}
\nabla_u f(x)&\ = \ \int_0^\infty e^{- t} \nabla_u \E\big[ f(X_t^x)-h(X_t^x)+\mu(h)\big]  \dif t \\
&\ = \ \int_0^\infty e^{- t} \E\left\{\big[ f(X_t^x)-h(X_t^x)+\mu(h)\big]   \mcl I_u^x(t)\right\} \dif t.
\end{split}
}
This proves \eq{e:g'1}.
When $h \in \mcl C^{2}(\R^{d},\R)$, it can be checked that $f\in \mcl C^2(\R^d,\R)$ and
\be{
\nabla_{u_1} f(x)\ = \ \int_0^\infty e^{-t} \E[\nabla_{u_1}f(X_t^x)-\nabla_{u_1} h(X_t^x)]dt.
}
By the dominated convergence theorem with \eq{de:The1Est} and \eq{e:3.4-early}, and by \eqref{e:BELFor} with
$\phi=\nabla_{u_1}f$ and $\phi=\nabla_{u_1} h$, we have
\Bes
\begin{split}
\nabla_{u_2}\nabla_{u_1} f(x)
&\ = \ \int_0^\infty e^{- t}  \E\left\{\big[\nabla_{u_1} f(X_t^x)-\nabla_{u_1}h(X_t^x)\big] \mcl I_{u_2}^x(t)\right\} \dif t.
\end{split}
\Ees


\end{proof}
\ \ \ \ \

\section{The proof of Theorem \ref{t:D2FEst}}
Now let us use the representations of $f, \nabla f$ and $\nabla^{2}f$ developed in the previous section to prove Theorem \ref{t:D2FEst}.

\begin{lem}
Let $h \in \mcl C^{2}(\R^{d},\R)$ be such that $\|\nabla h\| < \infty$. Then we have
 \Be \label{e:3.5-1}
\begin{split}
\left|\nabla_{u_2}\nabla_{u_1} f(x)\right| \ \le \ C_\theta \|\nabla h\| |u_1| |u_2|,
\end{split}
\Ee
 \Be \label{e:3.6-1}
|\nabla_{u_2}\nabla_{u_1} f(x+\e u)-\nabla_{u_2}\nabla_{u_1} f(x)| \ \le \  C_{\theta}  ||\nabla h|| |\e| \left(|\log |\e|| \vee 1\right) |u_{1}||u_{2}|,
\Ee
for all $\e \in \R$, $x, u_{1}, u_{2} \in \R^d$ and $u \in \R^d$ with $|u| \le 1$.
\end{lem}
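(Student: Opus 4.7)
The starting point is the stochastic representation \eqref{e:g''4} of the Hessian. For the uniform bound \eqref{e:3.5-1}, apply the triangle inequality inside \eqref{e:g''4}: the integrand is bounded pointwise by $C_\theta \|\nabla h\| |u_1| \cdot |\mcl I_{u_2}^x(t)|$, using $\|\nabla f\| \le C_\theta \|\nabla h\|$ from \eqref{e:3.4-early} (Lemma \ref{l:Nabf}). Taking expectation and invoking \eqref{de:The1Est} with $p=1$ gives $\E|\mcl I_{u_2}^x(t)| \le C_\theta |u_2|/\sqrt{t}$, and since $\int_0^\infty e^{-t} t^{-1/2}\,dt < \infty$, \eqref{e:3.5-1} follows.

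For the H\"older-type estimate \eqref{e:3.6-1}, use \eqref{e:g''4} to write
\[
\nabla_{u_2}\nabla_{u_1} f(x+\e u) - \nabla_{u_2}\nabla_{u_1} f(x) = \int_0^\infty e^{-t}\bigl[\E H_t(x+\e u) - \E H_t(x)\bigr]\,dt,
\]
where $H_t(y) := [\nabla_{u_1} f(X_t^y) - \nabla_{u_1} h(X_t^y)]\,\mcl I_{u_2}^y(t)$. Split this integral at $T := |\e|^2 \wedge 1$. On $(0,T]$, bound the two terms separately by the estimate from the previous paragraph, contributing at most $C_\theta \|\nabla h\| |u_1||u_2| \int_0^T e^{-t} t^{-1/2}\,dt \lesssim |\e|$.

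On $(T,\infty)$ the goal is the sharper bound $|\E H_t(x+\e u) - \E H_t(x)| \le C_\theta \|\nabla h\| |u_1||u_2| |\e|/t$, so that $\int_T^\infty e^{-t}/t\,dt$ supplies the logarithmic factor $|\log|\e||$. To avoid second-order derivatives of $h$ (which are not controlled by $\|\nabla h\|$), apply \eqref{e:2ndDerEqn-3} to transfer the $\nabla_{u_1}$-derivative off $h$, rewriting
\[
\E H_t(y) = \E\bigl[\nabla_{u_1} f(X_t^y)\,\mcl I_{u_2}^y(t)\bigr] - \E\bigl[h(X_t^y)\,\mcl I_{u_1,u_2}^y(t)\bigr].
\]
In each of the two terms on the right, decompose the difference between $y = x+\e u$ and $y = x$ into a flow-variation piece (where only the outer function $\nabla_{u_1} f$ or $h$ is shifted) plus an integrand-variation piece (where only the stochastic integral is shifted). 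The flow-variation pieces are controlled by combining the Lipschitz estimate \eqref{e:3.5-1} just proved for $\nabla_{u_1} f$, the Lipschitz continuity of $h$, the flow contraction $|X_t^{x+\e u} - X_t^x| \le e^{-\theta_0 t}|\e||u|$ from \eqref{de:Jac1Est>}, and the moment bounds \eqref{de:The1Est} and \eqref{de:The2Est>}.

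The main obstacle lies in the integrand-variation pieces, i.e.\ the $y$-stability of the stochastic integrals $\mcl I_{u_2}^y(t)$ and $\mcl I_{u_1,u_2}^y(t)$. Writing $\mcl I_\cdot^{x+\e u}(t) - \mcl I_\cdot^x(t) = \int_0^\e \nabla_u \mcl I_\cdot^{x+su}(t)\,ds$, the first stochastic integral is handled by \eqref{de:DThe1Est>}. The analogous estimate $\E|\nabla_u \mcl I_{u_1,u_2}^y(t)| \le C_\theta |u||u_1||u_2|/t$ for the second is the most delicate step; it should be obtained by differentiating $\mcl I_{u_1,u_2}^y(t) = \mcl I_{u_1}^y(t)\mcl I_{u_2}^y(t) - D_{V_2}\mcl I_{u_1}^y(t)$ in $y$, using the Jacobian bounds \eqref{de:Jac1Est>}--\eqref{de:Jac2Est>} and the second Malliavin derivative bound \eqref{e:DVNuXsp>}, then applying Burkholder--Davis--Gundy to the resulting stochastic integral terms. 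Once this is in hand, summing the four pieces and integrating $e^{-t}/t$ over $(T,\infty)$ yields the claimed $|\e|(|\log|\e|| \vee 1)$ bound.
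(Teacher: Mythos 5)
Your bound \eqref{e:3.5-1} is proved exactly as in the paper: apply \eqref{e:g''4}, bound the integrand by $(\|\nabla f\|+\|\nabla h\|)|u_1|\,\E|\mcl I_{u_2}^x(t)|$ via \eqref{e:3.4-early} and \eqref{de:The1Est}, then integrate. The high-level structure of your argument for \eqref{e:3.6-1} — splitting the time integral at roughly $\e^2$, a crude $t^{-1/2}$ bound on the small-$t$ part and a sharper $t^{-1}$ bound on the large-$t$ part, with the $t^{-1}$ coming from a second Bismut integration by parts and producing the $|\log\e|$ — also matches the paper.

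However, the order in which you apply the key identity \eqref{e:2ndDerEqn-3} introduces a genuine gap. You first rewrite $\E H_t(y)$ as $\E[\nabla_{u_1}f(X_t^y)\,\mcl I_{u_2}^y(t)]-\E[h(X_t^y)\,\mcl I_{u_1,u_2}^y(t)]$, and only then split each term into a flow-variation and an integrand-variation piece. The integrand-variation piece of the second term is $\E\bigl[h(X_t^x)\,(\mcl I_{u_1,u_2}^{x+\e u}(t)-\mcl I_{u_1,u_2}^x(t))\bigr]$, and here $h(X_t^x)$ is a Lipschitz but \emph{unbounded} factor. You cannot pull it out of the expectation with a uniform constant, and even subtracting a constant (using $\E\,\mcl I_{u_1,u_2}^y(t)=0$) only replaces it by $h(X_t^x)-\mu(h)$, whose $L^2$ norm grows like $|x|$. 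So this route does not yield a bound uniform in $x$, which is what \eqref{e:3.6-1} requires. In addition, your plan needs a new estimate $\E|\nabla_u\mcl I_{u_1,u_2}^y(t)|\le C_\theta|u||u_1||u_2|/t$ that the paper never proves and whose proof would require controlling $\nabla_u D_{V_2}\mcl I_{u_1}^y(t)$, a mixed third-order object.

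The paper avoids both problems by reversing the order. It sets $G(y)=\nabla_{u_1}f(X_t^y)-\nabla_{u_1}h(X_t^y)$ and first decomposes $\Psi=\Psi_1+\Psi_2$ with
$\Psi_1=\E\{G(x+\e u)[\mcl I_{u_2}^{x+\e u}(t)-\mcl I_{u_2}^x(t)]\}$ and
$\Psi_2=\E\{[G(x+\e u)-G(x)]\,\mcl I_{u_2}^x(t)\}$. In both pieces the unbounded $h$ never appears on its own: the factor $G$ is pointwise bounded by $(\|\nabla f\|+\|\nabla h\|)|u_1|$. Then $\Psi_1$ is handled by FTC in $x$ and the existing estimate \eqref{de:DThe1Est>}, giving $C_\theta\e\|\nabla h\||u_1||u_2|/\sqrt t$. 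Only in $\Psi_2$ is \eqref{e:2ndDerEqn-3} used — and it is applied to transfer the \emph{new} derivative $\nabla_u$ introduced by the FTC (not the original $\nabla_{u_1}$), yielding $G$ paired with $\mcl I_{u,u_2}^x(t)$, which is controlled by the already-established \eqref{de:The2Est>} and gives $C_\theta\e\|\nabla h\||u_1||u_2|/t$. No new lemma is needed, and every factor is bounded. You should adopt this order: decompose first keeping $\nabla_{u_1}f-\nabla_{u_1}h$ intact, then integrate by parts only in the flow-variation piece.
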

\begin{proof}
 From \eqref{e:g''4}, \eqref{e:3.4-early} and \eqref{de:The1Est}, we have
 \Bes
\begin{split}
\left|\nabla_{u_2}\nabla_{u_1} f(x)\right| & \ \le \ \int_0^\infty e^{- t}  \left|\E\left\{\big[\nabla_{u_1} f(X_t^x)-\nabla_{u_1}h(X_t^x)\big] \mcl I_{u_2}^x(t)\right\}\right| \dif t \\
& \ \le \ \left(\|\nabla f\|+\|\nabla h\|\right) |u_1| \int_0^\infty e^{- t}  \E\left[\left|\mcl I_{u_2}^x(t)\right|\right] \dif t \\
& \ \le \ C_\theta \|\nabla h\| |u_1| |u_2| \int_0^\infty e^{- t} t^{-1/2} \dif t \\
& \ \le \ C_\theta \|\nabla h\| |u_1| |u_2|.
\end{split}
\Ees
This proves \eq{e:3.5-1}. To prove \eq{e:3.6-1},
without loss of generality, we assume $\e>0$. By \eqref{e:g''4}, we have
\
\be{
\nabla_{u_2}\nabla_{u_1} f(x+\e u)-\nabla_{u_2}\nabla_{u_1}f(x)\ = \ \int_0^{\e^2}  e^{- t} \Psi \dif t+\int_{\e^2}^\infty  e^{- t} \Psi \dif t,
}
where
\be{
\begin{split}
&\Psi\ = \ \E\left\{\big[ \nabla_{u_1} f(X_t^{x+\e u})-\nabla_{u_1} h(X_t^{x+\e u})\big] \mcl I_{u_2}^{x+\e u}(t)\right\}\\
&\ \ \ \ \ \ \ \ -\E\left\{\big[ \nabla_{u_1} f(X_t^x)-\nabla_{u_1} h(X_t^x)\big] \mcl I_{u_2}^x(t)\right\}.  \\
\end{split}
}
We shall prove that
\ \ \ \
\Be  \label{e:1IntLog}
\left|\int_0^{\e^2}  e^{- t} \Psi \dif t\right| \ \le \ C_{\theta} \|\nabla h\| |u_1||u_2|  \e, \ \ \ \ \left|\int_{\e^2}^\infty  e^{- t} \Psi \dif t\right| \ \le \ C_\theta  (|\log |\e|| \vee 1) |\e| \|\nabla h\| |u_1| |u_2|.
\Ee
From these two inequalities, we immediately obtain \eqref{e:3.6-1}, as desired.

By \eqref{e:3.4-early} and \eqref{de:The1Est}, we have
\
\Bes
\begin{split}
\left|\Psi\right| & \ \le \ 2\left(\|\nabla f\|+\|\nabla h\|\right) |u_1|\E\big[\big|\mcl I^{x+\e u}_{u_2}(t)\big|+\big|\mcl I_{u_2}^x(t)\big|\big]\ \le \ C_{\theta} t^{-\frac 12} \|\nabla h\|  |u_1||u_2|,
\end{split}
\Ees
from which we obtain the first inequality in \eqref{e:1IntLog}.

We still need to prove the second inequality in \eqref{e:1IntLog}.
Note that
\be{
\Psi\ = \ \Psi_1+\Psi_{2}
}
where
\
\Bes
\begin{split}
&\Psi_{1}\ = \ \E\left\{\big[\nabla_{u_1} f(X_t^{x+\e u})-\nabla_{u_1} h(X_t^{x+\e u})\big]\big[\mcl I^{x+\e u}_{u_2}(t)-\mcl I_{u_2}^x(t)\big]\right\}, \\
&\Psi_{2}\ = \ \E\left\{\big[\nabla_{u_1} f(X_t^{x+\e u})-\nabla_{u_1} h(X_t^{x+\e u})-\nabla_{u_1} f(X_t^x)+\nabla_{u_1} h(X_t^x)\big] \mcl I_{u_2}^x(t)\right\}.
\end{split}
\Ees
For $\Psi_1$, we have
\ \ \
\Bes
\begin{split}
|\Psi_{1}|& \ \le \ \left(\|\nabla f\|+\|\nabla h\|\right) |u_1|\E\big[\left|\mcl I^{x+\e u}_{u_2}(t)-\mcl I_{u_2}^x(t)\right|\big] \\
& \ \le \ C_\theta \|\nabla h\|  |u_1| \E\left|\int_0^\e \left[\nabla_u \mcl I^{x+r u}_{u_2}(t)\right] \dif r\right| \\
& \ \le \ C_\theta \|\nabla h\|  |u_1| \int_0^\e \E\left|\nabla_u \mcl I^{x+r u}_{u_2}(t)\right| \dif r \\
& \ \le \ C_\theta \e \|\nabla h\|  |u_1| |u_2|   t^{-\frac 12},
\end{split}
\Ees
where the last inequality is by \eqref{de:DThe1Est>} and $|u|\le 1$.
For $\Psi_{2}$, by \eqref{e:2ndDerEqn-3}, we have
\ \ \ \
\Bes
\begin{split}
\left|\Psi_{2}\right|&\ = \ \left|\int_0^\e \E\left\{\nabla_u [\nabla_{u_1} f(X_t^{x+r u})-\nabla_{u_1} h(X_t^{x+r u})] \mcl I_{u_2}^x(t)\right\} \dif r\right| \\
&\ = \ \left|\int_0^\e  \E\left\{[\nabla_{u_1} f(X_t^{x+r u})-\nabla_{u_1} h(X_t^{x+r u})] \mcl I_{u,u_2}^x(t)\right\} \dif r\right| \\
&\ \le \ (\|\nabla f\|+\|\nabla h\|) |u_1| \int_0^\e  \E |\mcl I_{u,u_2}^x(t)| \dif r \\
& \ \le \ C_\theta \e \|\nabla h\| |u_1| |u_2| t^{-1},
\end{split}
\Ees
where the last inequality is by \eqref{de:The2Est>} and $|u| \le 1$.

Combining the estimates of $\Psi_1$ and $\Psi_2$, we obtain the second inequality in \eqref{e:1IntLog}.
\end{proof}

\begin{proof} [{\bf Proof of Theorem \ref{t:D2FEst}}]
Note that \eq{e:3.4-early} holds for any Lipschitz $h$,
which immediately implies \eqref{e:f'Est>}.

To prove the other two inequalities, it suffices to show that \eqref{e:3.5-1} and \eqref{e:3.6-1} hold for Lipschitz $h$. We now do so by a standard approximation.

Define
$$h_\delta(x)\ = \ \int_{\R^d} \phi_\delta(y) h(x-y) \dif y\ \ \ \  {\rm with} \ \ \delta>0,$$ where $\phi_\delta$ is the density function of the normal distribution
$N(0, \delta^2 I_d)$. It is easy to see that $h_\delta$ is smooth, $\lim_{\delta \rightarrow 0} h_\delta(x)=h(x)$, $\lim_{\delta \rightarrow 0} \nabla h_\delta(x)=\nabla h(x)$ and $|h_\delta(x)| \le C(1+|x|)$ for all $x \in \R^d$ and some $C>0$. Moreover, $\|\nabla h_\delta\| \le \|\nabla h\|$. The solution to the Stein equation \eqref{de:StEq}, with $h$ replaced by $h_\delta$, is
\be{  
f_\delta(x)\ =\ \int_0^\infty \E [h_\delta(X_t^x)-\mu(h_\delta)]\dif t.
}
Recall \eq{e:ErbEst}.
By the dominated convergence theorem,
$$\lim_{\delta \rightarrow 0} f_\delta(x)\ =\ \int_0^\infty \E [h(X_t^x)-\mu(h)]\dif t\ =\ f(x). $$
By \eqref{e:g'0} and the dominated convergence theorem,
\Bes
\lim_{\delta \rightarrow 0} \nabla_{u_1} f_\delta(x)\ =\ \lim_{\delta \rightarrow 0}\int_0^\infty \E\left[\nabla h_\delta(X_t^x) \nabla_{u_1} X_t^x\right]\dif t \ =\ \int_0^\infty \E\left[\nabla h(X_t^x) \nabla_{u_1} X_t^x\right]\dif t.
\Ees
As the differential operator $\nabla$ is closed \cite[Theorem 2.2.6]{Part04}, by the well known property of closed operators \cite[Proposition 2.1.4]{Part04}, we know that $f$ is differentiable and
\Bes
\nabla_{u_1} f(x) \ = \ \lim_{\delta \rightarrow 0} \nabla_{u_1} f_\delta(x).
\Ees
By \eqref{e:g''4}, we have
\Bes
\begin{split}
\nabla_{u_2} \nabla_{u_1} f_\delta(x)&\ = \
\int_0^{\infty} e^{- t} \E\left\{\big[\nabla_{u_1} f_\delta(X_t^x)+\nabla_{u_1} h_\delta (X_t^x)\big] \mcl I_{u_2}^x(t)\right\} \dif t,
\end{split}
\Ees
and by the dominated convergence theorem and the fact that $\nabla^2$ is closed, we have
\Be  \label{e:fetof-1}
\begin{split}
\lim_{\delta \rightarrow 0} \nabla_{u_2} \nabla_{u_1} f_\delta(x)&\ = \
\int_0^{\infty} e^{- t} \E\left\{\big[\nabla_{u_1} f(X_t^x)+\nabla_{u_1} h (X_t^x)\big] \mcl I_{u_2}^x(t)\right\} \dif t\ = \ \nabla_{u_2} \nabla_{u_1} f(x).
\end{split}
\Ee
By \eqref{e:3.5-1}, we have
 \Bes
\begin{split}
\left|\nabla_{u_2}\nabla_{u_1} f_\delta (x)\right| \ \le \ C_\theta \|\nabla h_\delta\| |u_1| |u_2|.
\end{split}
\Ees
Letting $\delta \rightarrow 0$, and by \eqref{e:fetof-1} and the fact that $\|\nabla h_\delta\| \le \|\nabla h\|$, we obtain \eqref{e:3.5-1} for Lipschitz $h$. Similarly we can prove \eqref{e:3.6-1} for Lipschitz $h$.
\end{proof}
\ \ \

\section{Proofs of the lemmas in Section \ref{s:SDE}}  \label{s:ProofSect5}
\begin{proof}[{\bf Proof of Lemma \ref{l:SolPro}}]
By It\^o's formula, \eq{a1a} and Cauchy's inequality, we have
\ \ \
\Bes
\begin{split}
\frac{d}{ds}\E |X_s^x|^2\  &=\ 2 \E \left[\Ll X_s^x, g(X_s^x)\Rr\right] +2d  \\
&\ = \ 2\E[\Ll X_s^x, g(X_s^x)-g(0)\Rr]+2\E[\Ll X_s^x, g(0) \Rr]+2d \\
& \ \le \ -2 \theta_0  \E  |X_s^x|^2 +\theta_0 \E|X_s^x|^2 +\frac{|g(0)|^2}{\theta_0} +2 d \\
& \ = \ - \theta_0  \E  |X_s^x|^2  +\frac{|g(0)|^2}{\theta_0} +2 d.
\end{split}
\Ees
This inequality, together with $X_0^x=x$, implies
\ \ \
\Bes
\begin{split}
\E |X_t^x|^2\  \le \ e^{-\theta_{0} t} |x|^2+(2d+\frac{|g(0)|^2}{\theta_0} ) \int_0^t  e^{-\theta_{0} (t-s)} \dif s \ \le \ e^{-\theta_{0} t} |x|^2+\frac{2d+|g(0)|^2/\theta_0}{\theta_{0}}.
\end{split}
\Ees
Let $\chi: [0,\infty) \rightarrow [0,1]$ be a continuous function such that $\chi(r)=1$ for $0 \le r \le 1$ and $\chi(r)=0$ for $r \ge 2$.
Let $R>0$ be a large number. The previous inequality implies
\Bes
\begin{split}
\E \left[|X_t^x|^2 \chi\left(|X_{t}^x|/R\right)\right]\ \le \ e^{-\theta_{0} t} |x|^2+\frac{2d+|g(0)|^2/\theta_0}{\theta_{0}}.
\end{split}
\Ees
Let $t \rightarrow \infty$. By the ergodicity of $X_{t}$ under weak topology (see Appendix A), we have
\Bes
\begin{split}
\int_{\R^{d}} |x|^2 \chi\left(|x|/R\right)\mu(\dif x) \  \le \ \frac{2d+|g(0)|^2/\theta_0}{\theta_{0}}.
\end{split}
\Ees
Letting $R \rightarrow \infty$, we obtain
\Bes
\begin{split}
\int_{\R^{d}} |x|^2 \mu(\dif x) \  \le \ \frac{2d+|g(0)|^2/\theta_0}{\theta_{0}}.
\end{split}
\Ees
\end{proof}

\


\begin{proof} [{\bf Proof of Lemma \ref{l:JacDev}}]
Recall $\theta_{0}>0$.
By \eqref{e:DuXt} and \eq{a2}, we have
\be{
\begin{split}
\frac{\dif}{\dif t} |\nabla_u X^x_t|^2& \ = \ 2 \Ll \nabla_u X^x_t, \nabla g(X^x_t) \nabla_u X^x_t \Rr \\
&\ \le \ -2\theta_0 \left(1+\theta_1 |X^x_t|^{\theta_2}\right)|\nabla_u X^x_t|^2,
\end{split}
}
which implies
\ \ \ \
\be{  
 |\nabla_u X^x_t|^2 \ \le \ \exp\left[-2\theta_0 \int_0^t \left(1+\theta_1 |X^x_s|^{\theta_2}\right) \dif s\right] |u|^2 \ \le \  e^{-2 \theta_0 t}|u|^2.
}
Writing $\zeta(t)=\nabla_{u_2}\nabla_{u_1}  X_t^x$, by \eqref{e:Du12Xt}, \eq{a2}, \eq{a3} and \eq{de:Jac1Est>},
we have
\ \ \
\Bes  
\begin{split}
\frac{\dif}{\dif t} |\zeta(t)|^2&\ =\ 2\Ll \zeta(t), \nabla g(X^x_t) \zeta(t)\Rr+2 \Ll \zeta(t), \nabla^2 g(X_t^x) \nabla_{u_2} X_t^x\nabla_{u_1} X_t^x\Rr \\
&\  \le \ -2\theta_0 \left(1+\theta_1 |X^x_t|^{\theta_2}\right)|\zeta(t)|^2+2\theta_3 \left(1+\theta_1 |X^x_t|\right)^{\theta_2-1}|\zeta(t)| |u_1| |u_2| \\
&\  \le \  -2\theta_0 \left(1+\theta_1 |X^x_t|^{\theta_2}\right)|\zeta(t)|^2+C_\theta  \left(1+\theta_1 |X^x_t|^{\theta_2}\right) |\zeta(t)| |u_1| |u_2| \\
&\ \le \ -\theta_0 \left(1+\theta_1 |X^x_t|^{\theta_2}\right)|\zeta(t)|^2+C_\theta  \left(1+\theta_1 |X^x_t|^{\theta_2}\right) |u_1|^2|u_2|^2,
\end{split}
\Ees
where the third inequality is by Cauchy's inequality. Recall that $\zeta(0)=0$. The above inequality implies
\ \ \
\Bes  
\begin{split}
|\zeta(t)|^2&\ \le \ C_\theta \int_0^t \exp\left[-\theta_0\int_s^t  \left(1+\theta_1 |X^x_r|^{\theta_2}\right) \dif r\right] \left(1+\theta_1 |X^x_s|^{\theta_2}\right) \dif s |u_1|^2|u_2|^2 \\
& \ \le \ C_\theta |u_1|^2 |u_2|^2,
\end{split}
\Ees
where the last inequality is by the following observation: for a nonnegative function $a: [0,t] \rightarrow \R$,
\be{  
\int_0^t e^{-\int_s^t a(r) \dif r} a(s) \dif s \ = \ 1-e^{-\int_0^t a(r) dr} \ \le \ 1.
}
Hence, \eqref{de:Jac2Est>} is proven.
\end{proof}

\begin{proof} [{\bf Proof of Lemma \ref{l:GraDeX}}]
From \eq{003} and
 the same argument as in the proof of Lemma \ref{l:JacDev}, we obtain the estimate in the lemma, as desired.  
\end{proof}

\begin{proof}[{\bf Proof of Lemma \ref{l:LambEst}}] Thanks to H\"{o}lder's inequality, it suffices to prove the inequalities for $p \ge 2$ in the lemma.
 By the Burkholder-Davis-Gundy inequality \cite[p. 160]{ReYo99} and \eqref{de:Jac1Est>}, we have
\
\be{
\begin{split}
\E|\mcl I_{u_1}^x(t)|^{p}& \ \leq \ \frac{C_{p}}{t^{p}}\E \left(\int_0^t |\nabla_{u_1} X_s^x|^2 \dif s\right)^{p/2} \ \le \  \frac{C_{p}|u_1|^{p}}{t^{p/2}}.
\end{split}
}
By the Burkholder-Davis-Gundy inequality and \eqref{de:Jac2Est>}, we have
\
\be{
\begin{split}
\E|\nabla_{u_2} \mcl I_{u_1}^x(t)|^{p}&\ \le \ \frac{C_{p}}{t^{p}}\E\left(\int_0^t |\nabla_{u_2} \nabla_{u_1}X_s^x|^{2} \dif s\right)^{p/2}  \ \le \ \frac{C_{\theta,p}|u_{1}|^{p} |u_{2}|^{p}}{t^{p/2}}. \\
\end{split}
}
It is easy to see that $D_{V_2} \mcl I_{u_1}^x(t)$ can be computed by \eqref{de:DVInt} as
\be{ 
\begin{split}
D_{V_2} \mcl I_{u_1}^x(t)\ =\ \frac{1}{\sqrt 2 t}\int_0^t \Ll D_{V_2} \nabla_{u_1} X_s^x,  \dif B_s\Rr+\frac{1}{2t^2}\int_0^t \Ll \nabla_{u_1} X_s^x, \nabla_{u_2} X_s^x\Rr \dif s.
\end{split}
}
By \eqref{e:DVNuXsp>}, Burkholder's and H\"{o}lder's inequalities, we have
\ \ \ \
\begin{align*}
\E \left|\frac{1}{\sqrt 2 t}\int_0^t \Ll D_{V_2} \nabla_{u_1} X_s^x,  \dif B_s\Rr\right|^{p} \le &\ \ \frac{C_p}{t^{p}}\E \left(\int_0^t |D_{V_2} \nabla_{u_1} X_s^x|^2 \dif s \right)^{p/2} \\
\le & \ \ \frac{C_p}{t^{p}} \left(\int_0^t \E |D_{V_2} \nabla_{u_1} X_s^x|^{p} \dif s\right) t^{\frac{p-2}2}   \\
 \le & \ \ \frac{C_{\theta,p} |u_2|^{p} |{u_1}|^{p}}{t^{p/2}}.
\end{align*}
By H\"{o}lder's inequality and \eqref{de:Jac1Est>}, we have
\ \ \
\Bes
\begin{split}
& \ \ \ \ \ \E \left|\frac{1}{2t^2}\int_0^t \Ll \nabla_{u_1} X_s^x, \nabla_{u_2} X_s^x\Rr \dif s\right|^{p} \\
 & \ \le \ \frac{1}{t^{2p}} \left(\int_0^t |\nabla_{u_1} X_s^x|^{p} |\nabla_{u_2} X_s^x|^{p} \dif s\right) t^{p-1} \ \le \ \frac{|{u_1}|^{p}|u_2|^{p}}{t^{p}}.
\end{split}
\Ees
Combining the previous three relations, we immediately obtain \eqref{e:DVLamU>}.

By the definition of $\mcl I_{u_1,u_2}^x(t)$, we have
\Bes
\begin{split}
\E|\mcl I_{u_1, u_2}^x(t)|^{p} & \ \le \ 2^{p-1} \E |\mcl I_{u_1}^x(t)\mcl I_{u_2}^x(t)|^{p}+2^{p-1} \E \left|D_{V_2} \mcl I_{u_1}^x(t) \right|^{p}.
\end{split}
\Ees
By \eqref{de:The1Est}, we have
\
\Bes
\begin{split}
\E |\mcl I_{u_1}^x(t)\mcl I_{u_2}^x(t)|^{p} & \ \le \ \sqrt{\E |\mcl I_{u_1}^x(t)|^{2p} \E |\mcl I_{u_2}^x(t)|^{2p}}  \\
& \ \le \ \sqrt{\E |\mcl I_{u_1}^x(t)|^{2p} \E |\mcl I_{u_2}^x(t)|^{2p}}\\
& \ \le \  \sqrt{\E |\mcl I_{u_1}^x(t)|^{2p} \E |\mcl I_{u_2}^x(t)|^{2p}}  \ \le \ C_{\theta, p} t^{-p}  |u_1|^{p} |u_2|^{p},
\end{split}
\Ees
which, together with \eqref{e:DVLamU>}, immediately gives \eqref{de:The2Est>}.
\end{proof}


  \appendix

  \section{On the ergodicity of SDE \eqref{de:OU}}

This section provides the details of the verification of the ergodicity of SDE \eqref{de:OU}.
 There are several ways to prove the ergodicity of SDE \eqref{de:OU}; here, we follow the approach used by Eberle \cite[Theorem 1 and Corollary 2]{Eb16} because it gives exponential convergence in Wasserstein distance. We verify the conditions in the theorem, adopting the notations in \cite{Eb16}. For any $r>0$, define
$$\kappa(r)\ = \ \inf\left\{-2 \frac{\Ll x-y, g(x)-g(y)\Rr}{|x-y|^2}: \ x,y \in \R^d \ s.t. \ |x-y|=\sqrt 2 r\right\}.$$
Compared with the conditions in \cite{Eb16}, SDE \eqref{de:OU} has $\sigma=\sqrt 2 I_d$ and $G=\frac 12 I_d$ and the associated intrinsic metric is $\frac{1}{\sqrt 2} |\cdot|$ with $|\cdot|$ being the Euclidean distance. By \eqref{a2}, we have
\ \ \ \
\Bes
\begin{split}
\Ll x-y, g(x)-g(y)\Rr&\ = \ \int_0^1 \Ll x-y, \nabla_{x-y}g(sx+(1-s)y)\Rr \dif s  \\
& \ \le \ -\theta_0\int_0^1 \left(1+\theta_1 |sx+(1-s)y|^{\theta_2}\right) |x-y|^2 \dif s,
\end{split}
\Ees
which implies that
$$\kappa(r) \ \ge \ \inf\left\{2\theta_0 \int_0^1 \left(1+\theta_1 |sx+(1-s)y|^{\theta_2}\right)  \dif s: \ x,y \in \R^d \ s.t. \ |x-y|=\sqrt 2 r\right\}.$$
Therefore, we have $\kappa(r)>0$ for $r>0$ and thus $\int_0^1 r \kappa(r)^{-} \dif r=0$.
Define
$$R_0\ = \ \inf\{R \ge 0: \ \kappa(r) \ge 0 \ \ \forall \ r \ge R\},$$
$$R_1\ = \ \inf\{R \ge R_0: \ \kappa(r) R(R-R_0)>8 \ \ \forall \ r \ge R\}.$$
It is easy to check that $R_0=0$ and $R_1 \in (0,\infty)$.

As
$\kappa(r)>0$ for all $r>0$, we have
$\varphi(r)=\exp\left(-\frac 14 \int_0^r s \kappa(s)^{-} \dif s\right)=1$ and thus $\Phi(r)=\int_0^r \varphi(s)\dif s=r$. Moreover, we have
$\alpha=1$ and
$$c\ = \ \left(\alpha \int_0^{R_1} \Phi(s) \varphi(s)^{-1} \dif s\right)^{-1}\ = \ \frac{2}{R^2_1}.$$
Applying Corollary 2 in \cite{Eb16}, we have
\ \ \
\Be \label{e:dWErg}
d_{\mcl W}(\mcl L(X_t^x),\mu) \ \le \ 2 e^{-ct} d_{\mcl W}(\delta_x,\mu), \ \ \ \ \forall \ t>0,
\Ee
where $\mcl L(X_t^x)$ denotes the probability distribution of $X_t^x$.
Note that the convergence rate $c>0$ only depends on $\theta_0, \theta_1$ and $\theta_2$.

From \eqref{e:dWErg}, we say that  $\mcl L(X_t^x) \rightarrow \mu$ weakly, in the sense that for any bounded continuous function
$f: \R^d \rightarrow \R$, we have
\Bes
\lim_{t\rightarrow \infty} \E f(X_t^x)\ = \ \mu(f),
\Ees
which immediately implies
\be{ 
\lim_{t\rightarrow \infty} \frac1t \int_0^t \E f(X_s^x) \dif s\ = \ \mu(f).
}




\section{Multivariate normal approximation}

In this appendix, we prove the results stated in Remark \ref{r8} with regard to multivariate normal approximation for sums of independent, bounded random vectors.

\begin{theorem}\label{t2}
Let $W=\frac{1}{\sqrt{n}}\sum_{i=1}^n X_i$ where  $X_1,\dots, X_n$ are independent $d$-dimensional random vectors such that $\E X_i=0$, $|X_i|\leq \beta$ and $\E W W^{\rm T}=I_d$. Then we have
\ben{\label{t2-1}
d_{\mcl W}(\mathcal{L}(W), \mathcal{L}(Z))\ \le  \frac{C d \beta}{\sqrt{n}}(1+\log n)
}
and
\ben{\label{t2-2}
d_{\mcl W}(\mathcal{L}(W), \mathcal{L}(Z))\ \le \  \frac{Cd^2 \beta}{\sqrt{n}},
}
where $C$ is an absolute constant and $Z$ has the standard $d$-dimensional normal distribution.
\end{theorem}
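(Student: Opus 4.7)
My plan has three main pieces: setting up the exchangeable pair, applying Theorem \ref{t:MThm} directly for (B.1), and removing the logarithm for (B.2) by mollification.

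\textbf{Setup.} I would begin with the standard exchangeable-pair construction for sums of independent vectors. Let $I$ be uniform on $\{1,\dots,n\}$, independent of everything, and let $(X_1',\dots,X_n')$ be an independent copy of $(X_1,\dots,X_n)$. Define $W':=W-\tfrac{1}{\sqrt n}X_I+\tfrac{1}{\sqrt n}X_I'$; then $(W,W')$ is exchangeable and $\delta:=W'-W=(X_I'-X_I)/\sqrt n$. A direct calculation gives $\E[\delta\,|\,W]=-W/n$, so with $g(x)=-x$ we identify $\lambda=1/n$ and $R_1=0$, and $\E[\delta\delta^{\rm T}\,|\,W]=\tfrac{1}{n}I_d+\tfrac{1}{n^2}\sum_i X_iX_i^{\rm T}=\tfrac{2}{n}(I_d+R_2)$ with $R_2=\tfrac{1}{2n}\sum_i(X_iX_i^{\rm T}-\E X_iX_i^{\rm T})$. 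Independence and $|X_i|\le\beta$ give $\E\|R_2\|_{\rm HS}^2\le\beta^2 d/(4n)$, hence $\sqrt d\,\E\|R_2\|_{\rm HS}\le d\beta/(2\sqrt n)$. Using $\E|X_i|^3\le\beta\,\E|X_i|^2$ and $\sum_i\E|X_i|^2=nd$, also $\E|\delta|^3\le 8\beta d/n^{3/2}$.

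\textbf{Proof of (B.1).} I would apply \eqref{e:EstDel-0} directly; with $R_1=0$, everything reduces to controlling $\E[|\delta|^3(|\log|\delta||\vee 1)]$. Set $M:=2\beta/\sqrt n$, so $|\delta|\le M$. Writing $|\log|\delta||\le|\log M|+|\log(|\delta|/M)|$ and using the elementary inequality $u|\log u|\le 1/e$ for $u\in(0,1]$ (which gives $u^3|\log u|\le u^2/e$), I get
\Be
|\delta|^3\,|\log|\delta||\ \le\ \frac{M}{e}|\delta|^2+|\log M|\,|\delta|^3.
\Ee
Taking expectations, using $\E|\delta|^2=2d/n$, and multiplying by $1/\lambda=n$, each term is $\le C\beta d(1+\log n)/\sqrt n$; combining with the $\sqrt d\,\E\|R_2\|_{\rm HS}$ term from \eqref{e:EstDel-0} produces (B.1).

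\textbf{Removing the log for (B.2).} The $|\log|\delta||$ in Theorem \ref{t:MThm} comes from the Hölder-type bound \eqref{e:D2FEst2>}, which is sharp for merely Lipschitz $h$. To avoid it, I would mollify: let $h_\sigma:=h\ast\phi_\sigma$ with $\phi_\sigma$ the $N(0,\sigma^2 I_d)$ density, so $h_\sigma\in\mcl C^\infty$, $\|\nabla h_\sigma\|\le\|\nabla h\|=1$, and $|\E h(W)-\E h_\sigma(W)|+|\E h(Z)-\E h_\sigma(Z)|\le 2\sigma\sqrt d$. Repeated Gaussian integration by parts yields bounds $\|\nabla^k h_\sigma\|\le C\|\nabla h\|/\sigma^{k-1}$, and these transfer (via the OU semigroup representation of the Stein solution $f_\sigma$ as in Section \ref{s:Rep}) to bounds on $\|\nabla^k f_\sigma\|$. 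In the exchangeable-pair identity for $f_\sigma$, I would then carry the Taylor expansion of $f_\sigma(W')-f_\sigma(W)$ to third order, replacing the use of \eqref{e:D2FEst2>} by a genuine cubic remainder controlled by $\lambda^{-1}\|\nabla^3 f_\sigma\|\E|\delta|^3$—no log. Balancing the smoothing error $\sigma\sqrt d$ against this cubic term, and adding the $R_2$ contribution (which remains $O(d\beta/\sqrt n)$), yields the $O(d^2\beta/\sqrt n)$ bound.

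\textbf{Main obstacle.} The delicate point is the balance in the $\sigma$ optimization and the corresponding dimension bookkeeping: a careless choice of how many integrations by parts to spend on $\nabla^3 f_\sigma$ loses powers of $n$, and one must keep one factor of $\nabla h$ explicit (rather than passing to $\nabla^3 h_\sigma$ with two factors of $1/\sigma$) in order for the optimization to land on the $n^{-1/2}$ rate. This is exactly where the extra factor of $d$ (yielding $d^2\beta$ instead of $d\beta$) is paid, making (B.2) a genuine trade-off rather than a strict improvement over (B.1).
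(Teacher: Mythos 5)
Your exchangeable-pair construction and the proof of \eqref{t2-1} follow the paper essentially step for step (same $W'$, same $\lambda,R_1,R_2$, same handling of $\E[|\delta|^3(|\log|\delta||\vee1)]$); that part is correct. The mollification argument you sketch for \eqref{t2-2}, however, does not remove the logarithm, and I do not see how to make it reach the stated rate.

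Consider the strongest version of the mollification strategy. With $h_\sigma=h\ast\phi_\sigma$, the Mehler representation \eqref{2} of the Gaussian Stein solution gives
$\nabla^3 f_{h_\sigma}(x)=-\int_0^\infty e^{-3s}\bigl(\nabla h\ast\nabla^2\phi_{\sqrt{\sigma^2+1-e^{-2s}}}\bigr)(e^{-s}x)\,\dif s$,
since $\phi_\sigma\ast\phi_{\sqrt{1-e^{-2s}}}=\phi_{\sqrt{\sigma^2+1-e^{-2s}}}$ and one derivative is placed on $h$. Hence
$\|\nabla^3 f_{h_\sigma}\|\le C\|\nabla h\|\int_0^\infty e^{-3s}(\sigma^2+1-e^{-2s})^{-1}\dif s\le C\|\nabla h\|(1+|\log\sigma|)$.
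Feeding this into the cubic remainder yields $\lambda^{-1}\|\nabla^3 f_{h_\sigma}\|\E|\delta|^3\le C\tfrac{d\beta}{\sqrt n}(1+|\log\sigma|)$, while the smoothing error is at most $2\sigma\sqrt d$. Balancing forces $\sigma\asymp d^{1/2}\beta/\sqrt n$, and you land back at $O\!\bigl(\tfrac{d\beta}{\sqrt n}\log n\bigr)$: no gain over \eqref{t2-1}. If you instead use only the crude bound $\|\nabla^3 f_{h_\sigma}\|\le C\|\nabla^2 h_\sigma\|\le C/\sigma$, the balance gives $\sigma\asymp(d^{1/2}\beta/\sqrt n)^{1/2}$ and a final rate of order $n^{-1/4}$, far worse. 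Your closing remark about keeping one factor of $\nabla h$ explicit correctly senses that something more is needed, but mollification by itself does not supply the extra cancellation; the $\log$ survives any such balance.

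What the paper actually does for \eqref{t2-2} is structurally different and hinges on a recursion in the quantity being estimated. Using the explicit solution \eqref{2}, the cubic remainder is written as $\sum_i\int_0^\infty\int_0^1 t\,\nabla^3_{\delta_i}\bigl(h_s\ast\phi_{\sqrt{e^{2s}-1}}\bigr)(W+ut\delta_i)\,\dif u\,\dif s$ with $h_s(x)=h(e^{-s}x)$, the $s$-integral is split at $\epsilon^2$, and for $s>\epsilon^2$ a Lindeberg-type swap is performed: one introduces a fresh independent copy $\hat X_i$, sets $\hat W_i=W-X_i/\sqrt n+\hat X_i/\sqrt n$ (equal in law to $W$ and independent of $(X_i,X_i')$), and then further replaces $\hat W_i$ by a standard Gaussian $\hat Z$. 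The three resulting pieces $R_{31},R_{32},R_{33}$ contribute $O(d\beta^2/(\epsilon n))$, $O(d\beta\eta/(\epsilon\sqrt n))$ where $\eta:=d_{\mcl W}(\mcl L(W),\mcl L(Z))$, and $O(d\beta/\sqrt n)$. The decisive point is that $R_{32}$ costs a multiple of $\eta$ itself; taking $\epsilon$ a sufficiently large multiple of $d\beta/\sqrt n$ makes that coefficient less than $1/2$, so the $\eta$ on the right can be absorbed into the $\eta$ on the left, and the surviving dominant term is $d\epsilon\asymp d^2\beta/\sqrt n$. This self-referential estimate is exactly what eliminates the logarithm and pays the extra factor of $d$; it has no counterpart in your sketch.
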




\begin{proof}
Note that by the same smoothing and limiting arguments as in the proof of Theorem \ref{t:MThm}, we only need to consider test functions $h\in \text{Lip}(\mathbb{R}^d, 1)$, which are smooth and have bounded derivatives of all orders. This is assumed throughout the proof so that the integration, differentiation, and their interchange are legitimate.

For multivariate normal approximation, the Stein equation \eq{de:StEq} simplifies to
\ben{\label{1}
\Delta f(w)-\Ll w, \nabla f(w) \Rr \ = \ h(w)-\E h(Z) .
}
An appropriate solution to \eq{1} is known to be
\ben{\label{2}
f_h(x)\ = \ -\int_0^{\infty} \{h*\phi_{\sqrt{1-e^{-2s}}}(e^{-s }x) -\E h(Z)\}ds,
}
where $*$ denotes the convolution and $\phi_r(x)=(2\pi r^2)^{-d/2}e^{-|x|^2/2r^2}$.
From \eq{1}, we have
\be{
d_{\mcl W}(\mathcal{L}(W), \mathcal{L}(Z))\ = \ \sup_{h\in \text{Lip}(\mathbb{R}^d, 1)}|\E W\cdot \nabla f(W)- \E \Delta f(W)|
}
with $f:=f_h$ in \eq{2} (we omit the dependence of $f$ on $h$ for notational convenience).

Let $C$ be a constant that may differ in different expressions.
Denote
\ben{\label{005}
\eta:\ = \ d_{\mcl W}(\mathcal{L}(W), \mathcal{L}(Z)).
}
Let $\{X_1',\dots, X_n'\}$ be an independent copy of $\{X_1,\dots, X_n\}$.
Let $I$ be uniformly chosen from $\{1,\dots, n\}$ and be independent of $\{X_1,\dots, X_n, X_1',\dots, X_n'\}$.
Define
$$W'\ = \ W-\frac{X_I}{\sqrt{n}}+\frac{X_I'}{\sqrt{n}}.$$
Then $W$ and $W'$ have the same distribution. Let
$$\delta\ = \ W'-W\ = \ \frac{X_I'}{\sqrt{n}}-\frac{X_I}{\sqrt{n}}.$$
We have, by the independence assumption and the facts that $\E X_i=0$ and $\E W W^{\rm T}=I_d$,
$$\E[\delta|W]\ = \ \frac{1}{n}\sum_{i=1}^n \E[\frac{X_i'}{\sqrt{n}}-\frac{X_i}{\sqrt{n}}|W]\ = \ -\frac{1}{n}W$$
and
$$\E[\delta\delta^{\rm T}|W]\ = \ \frac{2}{n}I_d +\frac{1}{n}\{\E[\frac{1}{n}\sum_{i=1}^n X_i X_i^{\rm T}|W]-I_d\}.$$
Therefore, \eq{e:A1} and \eq{e:A2} are satisfied with
$$\lambda\ = \ \frac{1}{n},\quad g(W)=-W, \quad R_1=0,\quad
R_2\ = \ \frac{1}{2}\{\E[\frac{1}{n}\sum_{i=1}^n X_i X_i^{\rm T}|W]-I_d\} \ $$
Note that this is Example 1 below Assumption \ref{a:A} with $\lambda_1=\dots=\lambda_d=1$.
By the boundedness condition, 
$$|\delta|\leq \frac{2\beta}{\sqrt{n}}.$$
We also have
$$\E[|\delta|^2]=\frac{2}{n}\sum_{i=1}^n \E[\frac{|X_i|^2}{n}]=\frac{2d}{n}.$$
As $\beta^2\geq \sum_{i=1}^n \E[\frac{|X_i|^2}{n}]=d$, we have $\beta\geq \sqrt{d}$.
Using these facts and assuming that $\beta\leq \sqrt{n}$ (otherwise \eq{t2-1} is trivial), 
in applying \eq{e:EstDel-0}, we have
$$\E|R_1|\ = \ 0,$$
\besn{\label{004}
&\sqrt{d}\E[||R_2||_{\rm HS}] \leq C\sqrt{d}\sqrt{\sum_{j,k=1}^d {\rm Var} [\frac{1}{n}\sum_{i=1}^n X_{ij} X_{ik}]}\\
= & C\sqrt{d}\sqrt{\sum_{j,k=1}^d\frac{1}{n^2}\sum_{i=1}^n {\rm Var} [ X_{ij} X_{ik}]}\leq C\sqrt{d}\sqrt{\sum_{j,k=1}^d\frac{1}{n^2}\sum_{i=1}^n \E [ X_{ij}^2 X_{ik}^2]}\\
=& C\sqrt{d}\sqrt{\frac{1}{n^2}\sum_{i=1}^n \E [ |X_{i}|^4]}\leq C\sqrt{d}\beta \sqrt{\frac{1}{n^2}\sum_{i=1}^n \E [ |X_{i}|^2]}=\frac{Cd\beta}{\sqrt{n}},
}
and
\bes{
&\frac{1}{\lambda}\E\left[|\delta|^3 \left(|\log|\delta|| \vee 1\right)\right]\\
\leq & Cn \frac{\beta}{\sqrt{n}} (1+\log n) \E[|\delta^2|]\leq \frac{Cd\beta}{\sqrt{n}}(1+\log n).
}
This proves \eq{t2-1}.

To prove \eq{t2-2}, we modify the argument from \eq{101} by using the explicit expression of $f$ in \eq{2}.
With $\delta_i=(X_i'-X_i)/\sqrt{n}$ and $h_s(x):=h(e^{-s}x)$, we have
\bes{
&\frac{1}{\lambda} \int_0^1 |\E[\langle \delta \delta^{\rm T},
\nabla^2 f(W+t\delta)-\nabla^2 f(W) \rangle_{\text{HS}} ] |dt\\
=&\int_0^1 |\E \sum_{i=1}^n \int_0^\infty \int_0^1 t \nabla_{\delta_i}^3 (h_s*\phi_{\sqrt{e^{2s}-1}})(W+ut\delta_i) du ds |   dt,
}
where $\nabla_{\delta_i}^3:=\nabla_{\delta_i}\nabla_{\delta_i}\nabla_{\delta_i}$ (cf. Section 2.1).
We separate the integration over $s$ into $\int_0^{\epsilon^2}$ and $\int_{\epsilon^2}^\infty$ with an $\epsilon$ to be chosen later.
For the part $\int_0^{\epsilon^2}$, exactly following \cite[pp 18-19]{VaVa10},
we have
\ben{\label{7}
C \E\sum_{i=1}^n \int_0^{\epsilon^2} e^{-s} |\delta_i|^2 \frac{1}{\sqrt{e^{2s}-1}} ds \ \le \  C d \epsilon,
}
where we used $\sum_{i=1}^n \E|\delta_i|^2=2d$.
The part $\int_{\epsilon^2}^\infty$ is treated differently.
Using the interchangeability of convolution and differentiation, we have
\besn{\label{6}
&\left| \E\sum_{i=1}^n \int_{\epsilon^2}^\infty \int_0^1 \nabla^3_{\delta_i} (h_s * \phi_{\sqrt{e^{2s}-1}})(W+ut\delta_i)duds \right|\\
\ = \ &\left| \E \sum_{i=1}^n \int_{\epsilon^2}^\infty \int_0^1 \int_{\mathbb{R}^d} h_s(W+ut\delta_i-x) \nabla^3_{\delta_i} \phi_{\sqrt{e^{2s}-1}} (x) dx du ds \right|.
}
Let $\{\hat{X}_1,\dots, \hat{X}_n\}$ be another independent copy of $\{X_1,\dots, X_n\}$ and be independent of $\{X_1',\dots, X_n'\}$, and let $\hat{W}_i=W-\frac{X_i}{\sqrt n}+\frac{\hat{X}_i}{\sqrt n}$.
From this construct, for each $i$, $\hat{W}_i$ has the same distribution as $W$ and is independent of $\{X_i, X_i'\}$.
Let $\hat{Z}$ be an independent standard Gaussian vector. Rewriting
\bes{
h_s(W+ut\delta_i-x)\ = \ &[h_s(W+ut\delta_i-x)-h_s(\hat{W}_i-x)]\\
&+[h_s(\hat{W}_i-x)-h_s(\hat{Z}-x)]\\
&+[h_s(\hat{Z}-x)],
}
the term inside the absolute value in \eq{6} is separated into three terms as follows:
\be{
R_{31}\ = \ \E \sum_{i=1}^n \int_{\epsilon^2}^\infty \int_0^1 \int_{\mathbb{R}^d} [h_s(W+ut\delta_i-x)-h_s(\hat{W}_i-x)] \nabla^3_{\delta_i} \phi_{\sqrt{e^{2s}-1}} (x) dx du ds,
}
\be{
R_{32}\ = \ \E \sum_{i=1}^n \int_{\epsilon^2}^\infty \int_0^1 \int_{\mathbb{R}^d} [h_s(\hat{W}_i-x)-h_s(\hat{Z}-x)] \nabla^3_{\delta_i} \phi_{\sqrt{e^{2s}-1}} (x) dx du ds,
}
\be{
R_{33}\ = \ \E \sum_{i=1}^n \int_{\epsilon^2}^\infty \int_0^1 \int_{\mathbb{R}^d} h_s(\hat{Z}-x) \nabla^3_{\delta_i} \phi_{\sqrt{e^{2s}-1}} (x) dx du ds.
}
We bound their absolute values separately.
From $h\in \text{Lip}(\mathbb{R}^d, 1)$, $h_s(x)=h(e^{-s}x)$ and $|X_i|\leq \beta$, we have
\be{
[h_s(W+ut\delta_i-x)-h_s(\hat{W}_i-x)]\ \le \  \frac{Ce^{-s}\beta}{\sqrt{n}}.
}
Moreover,
\be{
\int_{\mathbb{R}^d} |\nabla^3_{\delta_i} \phi_{\sqrt{e^{2s}-1}} (x) |dx \ \le \  C|\delta_i|^3\frac{1}{(e^{2s}-1)^{3/2}}.
}
Therefore,
\be{
|R_{31}|\ \le \  C\sum_{i=1}^n \E \int_{\epsilon^2}^\infty e^{-s} \frac{\beta |\delta_i|^3}{n^2} \frac{1}{(e^{2s}-1)^{3/2}}ds\ \le \  C \frac{d\beta^2}{\epsilon n},
}
where we used $|\delta_i|\leq 2\beta/\sqrt{n}$ and $\sum_{i=1}^n\E|\delta_i|^2=2d$.

From the definition of $\eta$ in \eq{005} and the fact that $\hat{W}_i$ has the same distribution as $W$, we have
\be{
|\E[h_s(\hat{W}_i-x)-h_s(\hat{Z}-x)]|\ \le \  e^{-s} \eta.
}
Using independence and the same argument as for $R_{31}$, we have
\be{
|R_{32}|\ \le \  \frac{Cd\beta \eta}{\epsilon\sqrt{n}}.
}
Now we bound $R_{33}$. Using integration by parts, and combining two independent Gaussians into one, we have
\bes{
|R_{33}|\ = \ &\left| \E \sum_{i=1}^n \int_{\epsilon^2}^\infty \int_0^1 \int_{\mathbb{R}^d} h_s(\hat{Z}-x) \nabla^3_{\delta_i} \phi_{\sqrt{e^{2s}-1}} (x) dx dt ds \right|\\
\ = \ &\left| \E \sum_{i=1}^n \int_{\epsilon^2}^\infty  \int_{\mathbb{R}^d} \nabla^3_{\delta_i} h_s(\hat{Z}-x)  \phi_{\sqrt{e^{2s}-1}} (x) dx  ds \right|\\
\ = \ &\left| \E \sum_{i=1}^n \int_{\epsilon^2}^\infty  \int_{\mathbb{R}^d} \nabla^3_{\delta_i} h_s(x)  \phi_{\sqrt{e^{2s}}} (x) dx  ds \right|\\
\ = \ &\left| \E \sum_{i=1}^n \int_{\epsilon^2}^\infty  \int_{\mathbb{R}^d}  \nabla_{\delta_i} h_s(x) D^2_{\delta_i}  \phi_{\sqrt{e^{2s}}} (x) dx  ds \right|\\
\ \leq\ &\left| \E \sum_{i=1}^n \int_{\epsilon^2}^\infty    |\delta_i| e^{-s} |\delta_i|^2 \frac{C}{e^{2s}}   ds \right| \ \le \  \frac{Cd\beta}{\sqrt{n}}.
}
From \eq{004}, \eq{7} and the bounds on $R_{31}, R_{32}$ and $R_{33}$, we have
\be{
\eta\ \le \  C(\frac{d \beta}{\sqrt{n}}+d\epsilon+\frac{d \beta^2}{\epsilon n}+\frac{d\beta \eta}{\epsilon \sqrt{n}}+\frac{d\beta}{\sqrt{n}}).
}
The theorem is proved by choosing $\epsilon$ to be a large multiple of $d\beta/\sqrt{n}$ and solving the recursive inequality for $\eta$.

\end{proof}

\begin{center}
{\bf Acknowledgements}
\end{center}

We thank Michel Ledoux for very helpful discussions.
We also thank two anonymous referees for their valuable comments which have improved the manuscript considerably.
Fang X. was partially supported by Hong Kong RGC ECS 24301617, a CUHK direct grant and a CUHK start-up grant.
Shao Q. M. was partially supported by Hong Kong RGC GRF 14302515 and 14304917.
Xu L. was partially supported by Macao S.A.R. (FDCT 038/2017/A1, FDCT 030/2016/A1, FDCT 025/2016/A1), NNSFC 11571390, University
of Macau (MYRG 2016-00025-FST, MYRG 2018-00133-FST).



\bibliographystyle{amsplain}

\end{document}